\newtheorem{thm}{Theorem}[section]
\newtheorem{prop}[thm]{Proposition}
\newtheorem{lem}[thm]{Lemma}
\theoremstyle{definition}
\newtheorem{defn}[thm]{Definition}
\newtheorem*{conv}{Convention}
\theoremstyle{remark}
\newtheorem{rmk}[thm]{Remark}
\newtheorem{ex}[thm]{Example}
\newcommand{\N}{\mathbb{N}}
\newcommand{\Z}{\mathbb{Z}}
\newcommand{\R}{\mathbb{R}}
\newcommand{\C}{\mathsf{C}}
\newcommand{\CHaus}{\mathsf{CHaus}}
\newcommand{\cont}{\mathsf{cont}}
\newcommand{\CptifSub}{\mathsf{CptifSub}}
\newcommand{\D}{\mathsf{D}}
\newcommand{\End}{\mathsf{End}}
\newcommand{\LCHaus}{\mathsf{LCHaus}}
\newcommand{\Set}{\mathsf{Set}}
\newcommand{\SFlow}{\mathsf{SFlow}}
\newcommand{\ShiftEq}{\mathsf{ShiftEq}}
\newcommand{\Sub}{\mathsf{Sub}}
\newcommand{\Sz}{\mathsf{Sz}}
\newcommand{\ds}{\displaystyle}
\newcommand{\partialto}{\rightharpoonup}
\newcommand{\xpartialto}{\xrightharpoonup}
\DeclareMathOperator{\Conl}{Conl}
\DeclareMathOperator{\Dom}{Dom}
\DeclareMathOperator{\id}{id}
\begin{document}

\title[Conley index theory without index pairs. I]
{Conley index theory without index pairs. I: The point-set level theory}
\author{Yosuke Morita}
\address{Faculty of Mathematics, Kyushu University, 
744 Motooka, Nishi-ku, Fukuoka 819-0395, Japan}
\email{y-morita@math.kyushu-u.ac.jp}

\begin{abstract}
We propose a new framework for Conley index theory. 
The main feature of our approach is that we do not use the notion of index pairs. 
We introduce, instead, the notions of compactifiable subsets and index neighbourhoods, 
and formulate and prove basic results in Conley index theory using these notions. 
We treat both the discrete time case and the continuous time case. 
\end{abstract}

\maketitle

\tableofcontents

\section{Introduction}

Conley index theory, initiated by Conley~\cite{Con78}, 
is an important tool for the study of topological dynamical systems. 
In this series of papers, we propose a new framework for this theory. 
We do not assume any prior knowledge of Conley index theory. 

In this introduction, we explain only the discrete time case for simplicity, 
although we treat both the discrete time case and the continuous time case in this paper. 

First, let us briefly summarize the traditional construction of the Conley index. 
Suppose that one is given a topological dynamical system, say, 
a self-homeomorphism $f \colon X \to X$ on a locally compact metrizable space $X$ 
(in this paper, we work in a more general setting, namely, 
a continuous partial self-map on a locally compact Hausdorff space). 
Let $S$ be an isolated $f$-invariant subset of $X$. 
In the traditional Conley index theory, 
one first defines the notion of \emph{index pairs} for $S$, 
whose precise definition varies among researchers 
(cf.\ Conley~\cite[Ch.~III, Defn.~4.1]{Con78}, Salamon~\cite[Defns.~4.1 and 5.1]{Sal85}, 
Robbin--Salamon~\cite[Defn.~5.1]{RS88}, Mrozek~\cite[Defn.~2.1]{Mro90}, 
Szymczak~\cite[Defn.~3.4]{Szy95}, Franks--Richeson~\cite[Defn.~3.1]{FR00}). 
Among them, the most general one is Robbin--Salamon's~\cite[Defn.~5.1]{RS88}. 
According to their definition, 
an index pair for $S$ is a pair $(N, L)$ satisfying the following three conditions: 
\begin{itemize}
\item $N$ and $L$ are compact subsets of $X$ satisfying $L \subset N$. 
\item $N \smallsetminus L$ is an isolating neighbourhood of $S$. 
\item The self-map $f_{(N, L)} \colon N/L \to N/L$ defined by 
\[
f_{(N, L)}(x) = 
\begin{cases}
f(x) & \text{(if $x \in (N \smallsetminus L) \cap f^{-1}(N \smallsetminus L)$)}, \\ 
\ast & \text{(otherwise)}
\end{cases}
\]
is continuous. 
\end{itemize}
One proves, then, the following results: 
\begin{itemize}
\item Every isolated $f$-invariant subset $S$ of $X$ admits an index pair. 
\item For any two index pairs $(N, L)$ and $(N', L')$ for $S$, 
one can canonically construct a family of continuous maps from $N/L$ to $N'/L'$ in a coherent way. 
\end{itemize}
The \emph{Conley index} of $S$ relative to $f$ is 
the image of the above based continuous self-map $f_{(N,L)} \colon N/L \to N/L$, 
seen as an object of the endomorphism category (cf.\ Definition~\ref{defn:disc-szymczak}~(1)), 
under an appropriate functor $Q$. 
Here, the word `appropriate' means that, roughly speaking, 
the following two conditions are satisfied: 
\begin{itemize}
\item The Conley index is independent of the choice of the index pair. 
More precisely, one can canonically construct, from the above family of continuous maps, 
a single isomorphism from $Q(f_{(N, L)})$ to $Q(f_{(N', L')})$. 
\item The Conley index is invariant under continuation of $f$. 
\end{itemize}
Various functors $Q$ satisfying these conditions are used
(cf.\ Robbin--Salamon~\cite[Defn.~9.1]{RS88}, 
Mrozek~\cite[Defn.~2.8]{Mro90}, \cite[Defn.~1.3]{Mro94}, 
Szymczak~\cite[Defn.~4.1]{Szy95}, Franks--Richeson~\cite[Defn.~4.8]{FR00}). 
Among them, the most general one is Szymczak's~\cite[Defn.~4.1]{Szy95}.

Next, let us explain our new approach. 
Our key observation is that, in the above definition of index pairs by Robbin--Salamon, 
$N$ and $L$ themselves are not important; 
we only need the information on the difference set $N \smallsetminus L$. 
To see this, let us introduce the following two notions: 
\begin{itemize}
\item We say that a locally compact Hausdorff subset $E$ of $X$ is \emph{$f$-compactifiable} if 
the self-map $f_E^+ \colon E^+ \to E^+$ defined by 
\[
f_E^+(x) =
\begin{cases}
f(x) & \text{(if $x \in E \cap f^{-1}(E)$)}, \\
\ast & \text{(otherwise)}
\end{cases}
\]
is continuous, where $E^+$ is the one-point compactification of $E$ 
(cf.\ Definition~\ref{defn:disc-compactifiable}). 
\item We say that a neighbourhood $E$ of $S$ is an \emph{index neighbourhood}
if it is isolating and $f$-compactifiable (cf.\ Definition~\ref{defn:disc-index-neighbourhood}).
\end{itemize}
Then, a pair $(N, L)$ of compact subsets of $X$ satisfying $L \subset N$ is an index pair for $S$ 
if and only if $N \smallsetminus L$ is an index neighbourhood of $S$. 
Also, if $E$ is an index neighbourhood of $S$, 
then $(\overline{E}, \overline{E} \smallsetminus E)$ is an index pair for $S$, 
where $\overline{E}$ is the closure of $E$. 
This observation suggests the possibility that the notion of index pairs is a \emph{red herring}, 
and one can develop Conley index theory in a more efficient and more transparent way 
by using the notion of index neighbourhoods instead. 
The purpose of this series of papers is to convince the readers that it is indeed the case. 

In fact, we can go further. 
One sees that crucially used in the construction of the family of continuous maps mentioned above 
is the following property of index neighbourhoods 
(cf.\ Salamon~\cite[Lem.~4.7]{Sal85} and Theorem~\ref{thm:disc-isolating-1}): 
\begin{itemize}
\item Let $E$ and $E'$ be two index neighbourhoods of $S$. 
Then, there exist $a,b,a',b' \in \N$ such that 
\[
\bigcap_{i=0}^{a+b} f^{-i}(E) \subset f^{-a}(E'), \qquad 
\bigcap_{i'=0}^{a'+b'} f^{-i'}(E') \subset f^{-a'}(E). 
\]
\end{itemize}
Let us write $E \sim_f E'$ when this condition is satisfied (cf.\ Definition~\ref{defn:disc-sim}). 
Then, one notices that the relation $\sim_f$ makes sense 
not only for isolating neighbourhoods of $S$ but more generally for all subsets of $X$. 
It is possible to construct a canonical family of based continuous maps from $E^+$ to $E'^+$ 
for any two $f$-compactifiable subsets $E$ and $E'$ satisfying $E \sim_f E'$
(cf.\ Definition~\ref{defn:disc-f_E'E} and Theorem~\ref{thm:disc-conley-2}). 
Thus, one might be able to say that 
the notions of isolated $f$-invariant subsets and index neighbourhoods are 
still somewhat superficial, and the truly important notions in Conley index theory are 
$f$-compactifiable subsets and the relation $\sim_f$ (this position is perhaps too extreme). 

Rewriting the proofs in Conley index theory using $f$-compactifiable subsets 
instead of index pairs is not a trivial task (at least to the author). 
One problem is that, in the paper of Robbin--Salamon, 
they rephrased the continuity of the self-map $N/L \to N/L$ 
into a certain condition which refers $N$ and $L$, 
not the difference set $N \smallsetminus L$ (\cite[Thms.~4.2 and 4.3]{RS88}), 
and develop the theory using this rephrased condition. 
Also, their construction of index pairs uses the Lyapunov function 
(\cite[Thms.~5.3 and 5.4]{RS88}), which is not available in our more general setting. 
Mrozek~\cite[\S 5]{Mro90}, \cite[\S 4]{Mro94} 
gave another construction of index pairs that does not use the Lyapunov function, 
but his argument seems to be difficult to translate into the language of $f$-compactifiable subsets. 
Many proofs that we give, including the proof of the existence of index neighbourhoods, 
are based on the following elementary observation: 
\begin{itemize}
\item Let $f \colon X \partialto Y$ 
be a continuous partial map of locally compact Hausdorff spaces. 
Then, the map $f^+ \colon X^+ \to Y^+$ between the one-point compactifications defined by 
\[
f^+(x) = \begin{cases} f(x) & \text{(if $x \in \Dom f$)}, \\ \ast & \text{(otherwise)} \end{cases} 
\]
is continuous if and only if $f$ is proper and openly defined 
(cf.\ Definitions~\ref{defn:disc-proper-openly-defined} and 
\ref{defn:disc-one-point-compactification}). 
\end{itemize}
This observation enables us to separate the continuity of $f^+$ into two conditions, 
the properness and the openly definedness of $f$. 
The author hopes that the proofs that we give are more transparent than the existing proofs. 

In this first paper of the series, 
we define the Conley index as an object at the point-set topology level, 
not at the homotopy category level, and study its point-set level properties. 
Homotopical properties of the Conley index, including the homotopy invariance under continuation, 
will be studied in the forthcoming paper~\cite{Mor}. 

\begin{rmk}
There is another approach to developing Conley index theory without using index pairs 
by S\'anchez-Gabites~\cite{San11}. 
\end{rmk}

Now, let us speculate some possibilities to apply our new framework to Floer theory of three-manifolds, without much supporting evidence. 

Using Conley index theory, Manolescu~\cite{Man03} constructed the Seiberg--Witten Floer 
stable homotopy type, which has more information than the usual monopole Floer homology. 
It is expected that there is an extension of the monopole Floer homology 
which assigns an $A_\infty$-category with a distinguished object to each compact three-manifold with boundary, 
and it can be used for formulating a gluing formula for the monopole Floer homology. 
One can envision that there is also an extension of the Seiberg--Witten Floer stable homotopy type 
which assigns a stable $(\infty, 1)$-category with a distinguished object to each compact three-manifold with boundary
(very roughly speaking, a stable $(\infty, 1)$-category is 
something like a stable homotopy category equipped with all the information of homotopy coherence, 
and can be seen as a nonlinear version of the $A_\infty$-category). 
However, the Conley index is usually defined at the level of the plain homotopy category, 
and because of that, Manolescu's Seiberg--Witten Floer stable homotopy type is 
defined as an object in a variant of the plain stable homotopy category. 
In this paper, we construct the Conley index purely at the point-set level so that, 
in the forthcoming paper~\cite{Mor}, 
the homotopy invariance under continuation can be established in a homotopy coherent manner. 
This could be a first step toward defining the above-mentioned extension of 
the Seiberg--Witten Floer stable homotopy type for compact three-manifolds with boundary. 

Unlike the Seiberg--Witten case, it is still not known if there is a homotopy refinement of 
the instanton Floer homology for closed three-manifolds. 
In the Seiberg--Witten case, the finite-dimensional approximation of the formal gradient flow 
satisfies a certain compactness condition (\cite[Prop.~3]{Man03}) 
that ensures the existence of an isolating neighbourhood. 
This is crucial for defining the Floer stable homotopy type via Conley index theory, 
because the Conley index is traditionally defined for isolated invariant subsets. 
An analogous compactness condition fails in the instanton case, and it seems very difficult 
to define the instanton Floer stable homotopy type by using the traditional Conley index theory. 
However, our new formulation does not necessarily require the existence of isolating neighbourhood; 
we only need a compactifiable subset. 
Thus, we can imagine the possibility that our framework works also for the instanton case, 
even though, at this moment, there is no circumstantial evidence for this speculation. 

\subsection*{Outline of the paper}
In the first half of this paper, we treat the discrete time case. 
In Section~\ref{sect:disc-Szymczak}, we give the definition of the Szymczak category, 
which is the universal receiver of the Conley index. 
In Section~\ref{sect:disc-set}, 
we give basic definitions and results on partial maps on sets, 
and define a functor $\Conl_f$ for partial self-maps on sets. 
In Section~\ref{sect:disc-topological-space}, 
we study proper and openly defined continuous partial maps, 
and refine $\Conl_f$ for continuous partial self-maps on locally compact Hausdorff spaces 
using the notion of $f$-compactifiable subsets. 
In Section~\ref{sect:disc-isolated}, 
we define the Conley index of isolated $f$-invariant sets using the notion of index neighbourhoods. 

In the second half of this paper, we treat the continuous time case. 
Many definitions, theorems, lemmas, and proofs are parallel to the discrete time case, 
while some definitions and proofs are more involved. 
We omit proofs when they are completely the same as the discrete time case. 
In Section~\ref{sect:cont-Szymczak}, 
we define an analogue of the Szymczak category in the continuous time setting. 
In Section~\ref{sect:cont-set}, 
we give basic definitions and results on partial semiflows on sets, 
and define a functor $\Conl_F$ for them. 
In Section~\ref{sect:cont-topological-space}, 
we study finite-time proper and openly defined continuous partial semiflows, 
and refine $\Conl_F$ for continuous partial semiflows on locally compact Hausdorff spaces
using the notion of $F$-compactifiable subsets. 
In Section~\ref{sect:cont-lemmas}, 
we prepare some lemmas needed in the next section. 
In Section~\ref{sect:cont-isolated}, 
we define the Conley index of isolated $F$-invariant sets using the notion of index neighbourhoods. 

In Appendix~\ref{append:proper}, we prove some results on proper maps that are needed in this paper. 
In Appendix~\ref{append:shift-equivalence}, 
we explain a categorical meaning of the Szymczak category using shift equivalences.
\begin{conv}
In this paper, every topological space is assumed to be compactly generated weak Hausdorff. 
Products, pullbacks, and subspaces are taken in the category of 
compactly generated weak Hausdorff spaces. 
\end{conv}

\part{The discrete time case}

\section{The Szymczak category}\label{sect:disc-Szymczak}

The Conley index is defined as an object of a certain category, which we call the Szymczak category: 

\begin{defn}\label{defn:disc-szymczak}
Let $\C$ be a category. 
\begin{enumerate}
\item We define the \emph{category of endomorphisms in $\C$} 
(or the \emph{category of $\N$-equivariant objects in $\C$}), 
for which we write $\End(\C)$, as follows: 
\begin{itemize}
\item An object of $\End(\C)$ is an endomorphism $f \colon X \to X$ in $\C$. 
\item For each two endomorphisms $f \colon X \to X$ and $g \colon Y \to Y$ in $\C$, 
the hom-set $\End(\C)(f, g)$ is given by 
\[ 
\End(\C)(f, g) = \{ \varphi \in \C(X, Y) \mid \varphi f = g \varphi \}. 
\]
\end{itemize}
\item We define the \emph{Szymczak category of $\C$}, 
for which we write $\Sz(\C)$, as follows: 
\begin{itemize}
\item An object of $\Sz(\C)$ is an endomorphism $f \colon X \to X$ in $\C$. 
\item For each two endomorphisms $f \colon X \to X$ and $g \colon Y \to Y$ in $\C$, 
the hom-set $\Sz(\C)(f, g)$ is given by
\[
\Sz(\C)(f, g) = (\End(\C)(f, g) \times \N) / \sim,
\]
where $(\varphi, k) \sim (\varphi', k')$ if and only if there exists $n \in \N$ such that 
$\varphi f^{k'+n} = \varphi' f^{k+n}$ (or equivalently, $g^{k'+n} \varphi = g^{k+n} \varphi'$). 
\item The composition of two morphisms 
$\overline{(\varphi, k)} \in \Sz(\C)(f, g)$ and $\overline{(\psi, \ell)} \in \Sz(\C)(g, h)$ 
is given by
$\overline{(\psi, \ell)} \circ \overline{(\varphi, k)} = \overline{(\psi \varphi, k + \ell)}$, 
which does not depend on the choice of the representatives. 
\end{itemize}
\end{enumerate}
\end{defn}

\begin{rmk}
The Szymczak category is introduced by Szymczak~\cite[\S 2.1]{Szy95} 
under the name `the category of objects equipped with a morphism'. 
\end{rmk}

We can easily see that an equivalence of two categories $\Phi \colon \C \to \D$ induces 
equivalences $\Phi \colon \End(\C) \to \End(\D)$ and $\Phi \colon \Sz(\C) \to \Sz(\D)$. 

\section{Partial maps of sets}\label{sect:disc-set}

\begin{defn}
Let $X$ and $Y$ be two sets. A \emph{partial map} from $X$ to $Y$ is a pair $(D, f)$, 
where $D$ is a subset of $X$, which we call the \emph{domain} of the partial map, 
and $f \colon D \to Y$ is a map. 
\end{defn}

We use the symbol $\partialto$ to signify a partial map. We write $\Dom f$ rather than $D$ 
to avoid specifying the domain of a partial map. 
We define the composite $gf \colon X \partialto Z$ of two partial maps
$f \colon X \partialto Y$ and $g \colon Y \partialto Z$ by 
\[
\Dom (gf) = f^{-1}(\Dom g), \qquad (gf)(x) = g(f(x)). 
\]

Let $\Set^\partial$ be the category of sets and partial maps. 
Let $\Set_\ast$ be the category of based sets and based maps. 
Then, we can define the following functor: 

\begin{defn}
We define a functor 
$({-})^+ \colon \Set^\partial \to \Set_\ast$ as follows: 
\begin{itemize}
\item Given a set $X$, 
we put $X^+ = X \sqcup \{ \ast \}$, with base point $\ast$. 
\item Given a partial map $f \colon X \partialto Y$ of sets, 
we define a based map $f^+ \colon X^+ \to Y^+$ by
\[
f^+(x) = \begin{cases} f(x) & \text{(if $x \in \Dom f$)}, \\ \ast & \text{(otherwise)}. \end{cases} 
\]
\end{itemize}
\end{defn}

\begin{prop}\label{prop:disc-plus-1}
$({-})^+ \colon \Set^\partial \to \Set_\ast$ is an equivalence of categories. 
\end{prop}
\begin{proof}
The following functor $U \colon \Set_\ast \to \Set^\partial$ is an inverse of $({-})^+$: 
\begin{itemize}
\item Given a based set $(X, x_0)$, 
put $U(X, x_0) = X \smallsetminus \{ x_0 \}$. 
\item Given a based set $f \colon (X, x_0) \to (Y, y_0)$, define a partial map 
$Uf \colon X \smallsetminus \{ x_0 \} \partialto Y \smallsetminus \{ y_0 \}$ by 
\[
\Dom (Uf) = f^{-1}(Y \smallsetminus \{ y_0 \}), \qquad (Uf)(x) = f(x). \qedhere
\]
\end{itemize}
\end{proof}

For $n \in \N$ and a partial self-map $f \colon X \partialto X$, we put
\[
f^n = \underbrace{f \circ \dots \circ f}_n \colon X \partialto X, 
\]
and for a subset $E$ of $X$, 
we use the notation $f^{-n}(E)$ to mean the inverse image $(f^n)^{-1}(E)$.

\begin{defn}
Let $f \colon X \partialto X$ be a partial self-map on a set $X$. 
For each subset $E$ of $X$, 
we define a partial self-map $f_E \colon E \partialto E$ by 
\[
\Dom f_E = E \cap f^{-1}(E), \qquad f_E(x) = f(x) 
\]
and call it the \emph{induced partial self-map on $E$}. 
\end{defn}

In this paper, by $f_E^n \ (n \in \N)$, we always mean $(f_E)^n$, not $(f^n)_E$. Thus,
\[
\Dom f_E^n = \bigcap_{i=0}^n f^{-i}(E), \qquad f_E^n(x) = f^n(x). 
\]

\begin{defn}\label{defn:disc-sim}
Let $f \colon X \partialto X$ be a partial self-map on a set $X$. 
Let $E$ and $E'$ be two subsets of $X$. 
\begin{enumerate}
\item We say that a triple $(a,b,c) \in \N^3$ is 
\emph{$(E, E')$-admissible relative to $f$} if 
\[
\bigcap_{i=0}^{a+b} f^{-i}(E) \subset f^{-a}(E'), \qquad 
\bigcap_{i'=0}^{b+c} f^{-i'}(E') \subset f^{-b}(E). 
\]
We write $A_f(E, E') \subset \N^3$ for the set of all $(E, E')$-admissible triples relative to $f$. 
\item We use the notation $E \sim_f E'$ to mean $A_f(E, E') \neq \varnothing$. 
\end{enumerate}
\end{defn}

\begin{rmk}\label{rmk:disc-sim-symmetric}
The relation $E \sim_f E'$ holds if and only if there exist 
$a, a', b, b' \in \N$ such that 
\[
\bigcap_{i=0}^{a+b} f^{-i}(E) \subset f^{-a}(E'), \qquad 
\bigcap_{i'=0}^{a'+b'} f^{-i'}(E') \subset f^{-a'}(E). 
\]
\end{rmk}

\begin{lem}\label{lem:disc-sim-equivalence-relation}
Let $f \colon X \partialto X$ be a partial self-map on a set $X$. 
\begin{enumerate}
\item For any subset $E$ of $X$, we have $A_f(E, E) = \N^3$. 
In particular, $(0,0,0) \in A_f(E, E)$. 
\item Let $E, E'$, and $E''$ be three subsets of $X$. 
If $(a,b,c) \in A_f(E, E')$ and $(a',b',c') \in A_f(E', E'')$, 
then $(a+a',b+b',c+c') \in A_f(E, E'')$. 
\end{enumerate}
\end{lem}

\begin{proof}
(1) Trivial. 

(2) We see that 
\begin{align*}
\bigcap_{j=0}^{a+a'+b+b'} f^{-j}(E) 
&= \bigcap_{i'=0}^{a'+b'} f^{-i'} 
\left( \bigcap_{i=0}^{a+b} f^{-i}(E) \right) \\
&\subset \bigcap_{i'=0}^{a'+b'} f^{-i'} (f^{-a}(E')) 
= f^{-a} \left( \bigcap_{i'=0}^{a'+b'} f^{-i'}(E') \right) \\
&\subset f^{-a}(f^{-a'}(E'')) 
= f^{-(a+a')}(E''). 
\end{align*}
Similarly, we have 
\[
\bigcap_{j''=0}^{b+b'+c+c'} f^{-j''}(E'') \subset f^{-(b+b')}(E). \qedhere
\]
\end{proof}

It follows from Remark~\ref{rmk:disc-sim-symmetric} and 
Lemma~\ref{lem:disc-sim-equivalence-relation} that $\sim_f$ is an equivalence relation 
on the set of all subsets of $X$. 

\begin{defn}\label{defn:disc-f_E'E}
Let $f \colon X \partialto X$ be a partial self-map on a set $X$. 
Let $E$ and $E'$ be two subsets of $X$. 
For each $(a,b,c) \in A_f(E, E')$, define a partial map 
$f_{E'E}^{(a,b,c)} \colon E \partialto E'$ by 
\[
\Dom f_{E'E}^{(a,b,c)} 
= \bigcap_{i=0}^{a+b} f^{-i}(E) \cap \bigcap_{i'=a}^{a+b+c} f^{-i'}(E'), \qquad
f_{E'E}^{(a,b,c)}(x) = f^{a+b+c}(x). 
\]
\end{defn}

\begin{thm}\label{thm:disc-conley-1}
Let $f \colon X \partialto X$ be a partial self-map on a set $X$. 
\begin{enumerate}
\item Let $E$ be a subset of $X$. 
Then, for any $(a,b,c) \in \N^3$, we have 
\[
f_{EE}^{(a,b,c)} = f_E^{a+b+c}.
\] 
In particular, $f_{EE}^{(0,0,0)} = \id_E$. 
\item Let $E, E'$, and $E''$ be three subsets of $X$. 
Then, for any $(a,b,c) \in A_f(E, E')$ and $(a',b',c') \in A_f(E', E'')$, 
we have 
\[
f_{E''E'}^{(a',b',c')} \circ f_{E'E}^{(a,b,c)} = f_{E''E}^{(a+a',b+b',c+c')}. 
\]
\item Let $E$ and $E'$ be two subsets of $X$. 
Then, for any $(a,b,c) \in A_f(E, E')$, we have 
\[
f_{E'E}^{(a,b,c)} \circ f_E = f_{E'} \circ f_{E'E}^{(a,b,c)}. 
\]
\item Let $E$ and $E'$ be two subsets of $X$. 
Then, for any $(a,b,c), (a',b',c') \in A_f(E, E')$, we have 
\[
f_{E'E}^{(a,b,c)} \circ f_E^{a'+b'+c'} = f_{E'E}^{(a',b',c')} \circ f_E^{a+b+c}. 
\]
\end{enumerate}
\end{thm}
\begin{proof}
(1) Trivial. 

(2) It suffices to verify that 
\[
\Dom \left( f_{E''E'}^{(a',b',c')} \circ f_{E'E}^{(a,b,c)} \right) 
= \Dom f_{E''E}^{(a+a',b+b',c+c')}. 
\]
By definition, 
\begin{align*}
&\Dom \left( f_{E''E'}^{(a',b',c')} \circ f_{E'E}^{(a,b,c)} \right) \\
&\qquad= \bigcap_{i=0}^{a+b} f^{-i}(E) \cap \bigcap_{i'=a}^{a+b+c} f^{-i'}(E') \\
&\qquad\qquad\qquad\qquad
\cap f^{-(a+b+c)} \left( \bigcap_{j'=0}^{a'+b'} f^{-j'}(E') 
\cap \bigcap_{j''=a'}^{a'+b'+c'} f^{-j''}(E'') \right) \\
&\qquad= \bigcap_{i=0}^{a+b} f^{-i}(E) \cap \bigcap_{i'=a}^{a+a'+b+b'+c} f^{-i'}(E')
\cap \bigcap_{i''=a+a'+b+c}^{a+a'+b+b'+c+c'} f^{-i''}(E''). 
\end{align*}
Since
\begin{align*}
\bigcap_{i'=a}^{a+a'+b+b'+c} f^{-i'}(E')
&= \bigcap_{j=a}^{a+a'+b'} f^{-j} \left( \bigcap_{j'=0}^{b+c} f^{-j'}(E') \right) \\
&\subset \bigcap_{j=a}^{a+a'+b'} f^{-j}(f^{-b}(E))
= \bigcap_{i=a+b}^{a+a'+b+b'} f^{-i}(E)
\end{align*}
and
\begin{align*}
\bigcap_{i'=a}^{a+a'+b+b'+c} f^{-i'}(E')
&= \bigcap_{j''=a}^{a+b+c} f^{-j''} \left( \bigcap_{j'=0}^{a'+b'} f^{-j'}(E') \right)\\
&\subset \bigcap_{j''=a}^{a+b+c} f^{-j''}(f^{-a'}(E'')) 
= \bigcap_{i''=a+a'}^{a+a'+b+c} f^{-i''}(E''), 
\end{align*}
we have
\begin{align*}
&\bigcap_{i'=a}^{a+a'+b+b'+c} f^{-i'}(E') \\
&\qquad\qquad = \bigcap_{i=a+b}^{a+a'+b+b'} f^{-i}(E)
\cap \bigcap_{i'=a}^{a+a'+b+b'+c} f^{-i'}(E')
\cap \bigcap_{i''=a+a'}^{a+a'+b+c} f^{-i''}(E'').
\end{align*}
Therefore, 
\begin{align*}
&\bigcap_{i=0}^{a+b} f^{-i}(E) \cap \bigcap_{i'=a}^{a+a'+b+b'+c} f^{-i'}(E')
\cap \bigcap_{i''=a+a'+b+c}^{a+a'+b+b'+c+c'} f^{-i''}(E'') \\
&\qquad = \bigcap_{i=0}^{a+a'+b+b'} f^{-i}(E) \cap \bigcap_{i'=a}^{a+a'+b+b'+c} f^{-i'}(E')
\cap \bigcap_{i''=a+a'}^{a+a'+b+b'+c+c'} f^{-i''}(E''). 
\end{align*}
On the other hand, by definition, 
\[
\Dom f_{E''E}^{(a+a',b+b',c+c')} 
= \bigcap_{i=0}^{a+a'+b+b'} f^{-i}(E) \cap \bigcap_{i''=a+a'}^{a+a'+b+b'+c+c'} f^{-i''}(E''). 
\]
Since
\begin{align*}
\bigcap_{i=0}^{a+a'+b+b'} f^{-i}(E)
&= \bigcap_{j'=0}^{a'+b'} f^{-j'} \left( \bigcap_{j=0}^{a+b} f^{-j}(E) \right) \\
&\subset \bigcap_{j'=0}^{a'+b'} f^{-j'}(f^{-a}(E')) 
= \bigcap_{i'=a}^{a+a'+b'} f^{-i'}(E'), 
\end{align*}
we have 
\[
\bigcap_{i=0}^{a+a'+b+b'} f^{-i}(E)
= \bigcap_{i=0}^{a+a'+b+b'} f^{-i}(E)
\cap \bigcap_{i'=a}^{a+a'+b'} f^{-i'}(E').
\]
Since
\begin{align*}
\bigcap_{i''=a+a'}^{a+a'+b+b'+c+c'} f^{-i''}(E'')
&= \bigcap_{j'=a+a'}^{a+a'+b+c} f^{-j'} \left( \bigcap_{j''=0}^{b'+c'} f^{-j''}(E'') \right) \\
&\subset \bigcap_{j'=a+a'}^{a+a'+b+c} f^{-j'}(f^{-b'}(E')) 
= \bigcap_{i'=a+a'+b'}^{a+a'+b+b'+c} f^{-i'}(E'), 
\end{align*}
we have
\[
\bigcap_{i''=a+a'}^{a+a'+b+b'+c+c'} f^{-i''}(E'')
= \bigcap_{i'=a+a'+b'}^{a+a'+b+b'+c} f^{-i'}(E')
\cap \bigcap_{i''=a+a'}^{a+a'+b+b'+c+c'} f^{-i''}(E'').
\]
Therefore, 
\begin{align*}
&\bigcap_{i=0}^{a+a'+b+b'} f^{-i}(E) \cap \bigcap_{i''=a+a'}^{a+a'+b+b'+c+c'} f^{-i''}(E'') \\
&\qquad = \bigcap_{i=0}^{a+a'+b+b'} f^{-i}(E) \cap \bigcap_{i'=a}^{a+a'+b+b'+c} f^{-i'}(E')
\cap \bigcap_{i''=a+a'}^{a+a'+b+b'+c+c'} f^{-i''}(E''). 
\end{align*}
This completes the proof of (2). 

(3) By (1) and (2), 
\[
f_{E'E}^{(a,b,c)} \circ f_E
= f_{E'E}^{(a,b,c)} \circ f_{EE}^{(0,0,1)} 
= f_{E'E}^{(a,b,c+1)} 
= f_{E'E'}^{(0,0,1)} \circ f_{E'E}^{(a,b,c)} 
= f_{E'} \circ f_{E'E}^{(a,b,c)}. 
\]

(4) By (1) and (2), 
\begin{align*}
f_{E'E}^{(a,b,c)} \circ f_E^{c'} 
&= f_{E'E}^{(a,b,c)} \circ f_{EE}^{(a',b',c')} 
= f_{E'E}^{(a+a',b+b',c+c')} \\
&= f_{E'E}^{(a',b',c')} \circ f_{EE}^{(a,b,c)}
= f_{E'E}^{(a',b',c')} \circ f_E^c. \qedhere
\end{align*}
\end{proof}

\begin{defn}\label{defn:disc-functor-1}
Let $f \colon X \partialto X$ be a partial self-map on a set $X$. 
\begin{enumerate}
\item We define a small category $\widetilde{\Sub}_f$ as follows: 
\begin{itemize}
\item An object of $\widetilde{\Sub}_f$ is a subset of $X$. 
\item For each two subsets $E$ and $E'$ of $X$, 
the hom-set $\widetilde{\Sub}_f(E, E')$ is given by $\widetilde{\Sub}_f(E, E') = A_f(E, E')$. 
\item The composition of two morphisms is given by addition. 
\end{itemize}
\item We define a functor $\widetilde{\Conl}_f \colon \widetilde{\Sub}_f \to \End(\Set_\ast)$ as follows: 
\begin{itemize}
\item For each subset $E$ of $X$, we put 
$\widetilde{\Conl}_f(E) = (f_E^+ \colon E^+ \to E^+)$. 
\item For each two subsets $E$ and $E'$ of $X$, we assign to $(a,b,c) \in A_f(E, E')$ 
the morphism $(f_{E'E}^{(a,b,c)})^+ \colon f_E^+ \to f_{E'}^+$.
\end{itemize}
\end{enumerate}
\end{defn}

By Lemma~\ref{lem:disc-sim-equivalence-relation}, $\widetilde{\Sub}_f$ is indeed a category. 
Let us verify that $\widetilde{\Conl}_f$ is a functor. 
First, by Proposition~\ref{prop:disc-plus-1} and Theorem~\ref{thm:disc-conley-1}~(3), 
 $(f_{E'E}^{(a,b,c)})^+$
is a morphism from $f_E^+$ to $f_{E'}^+$ in $\End(\Set_\ast)$. 
Second, we see from Theorem~\ref{thm:disc-conley-1}~(1) and (2) that 
$\widetilde{\Conl}_f$ is compatible with the identity morphisms and composition. 
Thus, $\widetilde{\Conl}_f$ is a functor. 

For many purposes, the functor $\widetilde{\Conl}_f$ has too much information. 
One can argue that the important thing is the relation $\sim_f$, and 
it is not appropriate to ask if $(a,b,c) \in A_f(E, E')$ or not for a particular $(a,b,c) \in \N^3$. 
For this reason, we introduce the following truncations of 
$\widetilde{\Sub}_f$ and $\widetilde{\Conl}_f$: 

\begin{defn}\label{defn:disc-functor-2}
Let $f \colon X \partialto X$ be a partial self-map on a set $X$. 
\begin{enumerate}
\item We define a small category $\Sub_f$ as follows: 
\begin{itemize}
\item An object of $\Sub_f$ is a subset of $X$. 
\item For each two subsets $E$ and $E'$ of $X$, 
the hom-set $\Sub_f(E, E')$ is given by 
\[
\Sub_f(E, E') = 
\begin{cases}
\ast & \text{(if $E \sim_f E'$)}, \\
\varnothing & \text{(otherwise)}. 
\end{cases}
\]
\end{itemize}
\item We define a functor $\Conl_f \colon \Sub_f \to \Sz(\Set_\ast)$ as follows: 
\begin{itemize}
\item For each subset $E$ of $X$, we put 
$\Conl_f(E) = (f_E^+ \colon E^+ \to E^+)$. 
\item To each two subsets $E$ and $E'$ of $X$ with $E \sim_f E'$, 
we assign the morphism $f_{E'E}^+ \colon f_E^+ \to f_{E'}^+$ defined by 
\[
f_{E'E}^+ = \overline{\left((f_{E'E}^{(a,b,c)})^+, a+b+c \right)}, 
\]
where $(a,b,c) \in A_f(E, E')$. 
\end{itemize}
\end{enumerate}
\end{defn}

If $(a,b,c), (a',b',c') \in A_f(E, E')$, 
it follows from Theorem~\ref{thm:disc-conley-1}~(4) that
\begin{align*}
\overline{\left((f_{E'E}^{(a,b,c)})^+, a+b+c\right)} 
&= \overline{\left((f_{E'E}^{(a,b,c)})^+ \circ (f_E^{a'+b'+c'})^+, a+b+c+a'+b'+c'\right)} \\
&= \overline{\left((f_{E'E}^{(a',b',c')})^+ \circ (f_E^{a+b+c})^+, a+b+c+a'+b'+c'\right)} \\
&= \overline{\left((f_{E'E}^{(a',b',c')})^+, a'+b'+c'\right)}.
\end{align*}
Thus, the definition of $f_{E'E}^+$ does not depend on the choice of $(a,b,c) \in A_f(E, E')$, 
and $\Conl_f$ is a well-defined functor. 

\begin{rmk}
Since $\sim_f$ is a symmetric relation, every morphism in $\Sub_f$ is an isomorphism. 
\end{rmk}

\section{Continuous partial maps of topological spaces}\label{sect:disc-topological-space}

\begin{defn}
Let $X$ and $Y$ be two topological spaces. We say that a partial map 
$f \colon X \partialto Y$ is \emph{continuous} if $f$ is continuous as a map from $\Dom f$ to $Y$. 
\end{defn}

The composite of two continuous partial maps is continuous. 

\begin{defn}\label{defn:disc-proper-openly-defined}
Let $f \colon X \partialto Y$ be a continuous partial map of topological spaces. 
\begin{enumerate}
\item We say that $f$ is \emph{proper} if $f$ is proper as a map from $\Dom f$ to $Y$. 
\item We say that $f$ is \emph{openly defined} if $\Dom f$ is open in $X$. 
\end{enumerate}
\end{defn}

The composite of two proper (resp.\ openly defined) partial maps is 
proper (resp.\ openly defined). 

We write $\LCHaus^\partial$ for the category whose objects are locally compact Hausdorff spaces and 
whose morphisms are proper and openly defined partial maps. 
Let $\CHaus_\ast$ be the category of based compact Hausdorff spaces and based continuous maps. 
The significance of proper and openly defined partial maps is 
that we can define the following functor: 

\begin{defn}\label{defn:disc-one-point-compactification}
We define a functor 
$({-})^+ \colon \LCHaus^\partial \to \CHaus_\ast$ as follows: 
\begin{itemize}
\item Given a locally compact Hausdorff space $X$, 
we put $X^+$ to be the one-point compactification of $X$, with base point $\ast$. 
\item Given a proper and openly defined partial map $f \colon X \partialto Y$ 
of locally compact Hausdorff spaces, 
we define a based continuous map $f^+ \colon X^+ \to Y^+$ by
\[
f^+(x) = \begin{cases} f(x) & \text{(if $x \in \Dom f$)}, \\ \ast & \text{(otherwise)}. \end{cases} 
\]
\end{itemize}
\end{defn}

Let us verify that $f^+$ is indeed continuous. 
Suppose that $L$ is a closed subset of $Y^+$ that does not contain $\ast$. We have 
\[
(f^+)^{-1}(L) = f^{-1}(L). 
\]
Since $L$ is a compact subset of $Y$ and $f \colon \Dom f \to Y$ is a proper map, 
$f^{-1}(L)$ is a compact subset of $X$. Thus, $(f^+)^{-1}(L)$ is closed in $X^+$. 
Suppose that $L$ is a closed subset of $Y^+$ that contains the base point $\ast$. We have 
\[
(f^+)^{-1}(L) = f^{-1}(L \smallsetminus \{ \ast \}) \cup (X \smallsetminus \Dom f) \cup \{ \ast \}.
\]
Since $L \smallsetminus \{ \ast \}$ is a closed subset of $Y$ and $\Dom f$ is an open subset of $X$, 
$f^{-1}(L \smallsetminus \{ \ast \}) \cup (X \smallsetminus \Dom f)$ is closed in $X$. 
Thus, $(f^+)^{-1}(L)$ is closed in $X^+$. 

\begin{prop}\label{prop:disc-plus-2}
$({-})^+ \colon \LCHaus^\partial \to \CHaus_\ast$ is an equivalence of categories. 
\end{prop}
\begin{proof}
The following functor $U \colon \CHaus_\ast \to \LCHaus^\partial$ 
is an inverse of $({-})^+$: 
\begin{itemize}
\item Given a based compact Hausdorff space $(X, x_0)$, 
put $U(X, x_0) = X \smallsetminus \{ x_0 \}$. 
\item Given a based continuous map $f \colon (X, x_0) \to (Y, y_0)$, define a partial map 
$Uf \colon X \smallsetminus \{ x_0 \} \partialto Y \smallsetminus \{ y_0 \}$ by 
\[
\Dom (Uf) = f^{-1}(Y \smallsetminus \{ y_0 \}), \qquad (Uf)(x) = f(x).
\]
\end{itemize}
For the sake of completeness, 
let us verify that $Uf$ is indeed proper and openly defined. 
Consider the following pullback diagram: 
\[
\begin{tikzcd}
\Dom (Uf) \arrow[r, "Uf"] \arrow[d, hook] & Y \smallsetminus \{ y_0 \} \arrow[d, hook] \\
X \arrow[r, "f"] & Y.
\end{tikzcd}
\]
The bottom map is proper since $X$ is compact Hausdorff (Lemma~\ref{lem:proper-source-compact}). 
Thus, the top map is also proper (Lemma~\ref{lem:proper-base-change}). 
Since $Y \smallsetminus \{ y_0 \}$ is open in $X$, 
the inverse image $f^{-1}(Y \smallsetminus \{ y_0 \}) \ (= \Dom (Uf))$ 
is open in $X$. 
Since $\Dom (Uf)$ is contained in $X \smallsetminus \{ x_0 \}$, 
it is also open in $X \smallsetminus \{ x_0 \}$. 
\end{proof}

\begin{thm}\label{thm:disc-conley-2}
Let $f \colon X \to X$ be a continuous partial self-map on a topological space $X$. 
Let $E$ and $E'$ be two subsets of $X$. Let $(a,b,c) \in A_f(E, E')$. 
\begin{enumerate}
\item If $f_E$ and $f_{E'}$ are proper, $f_{E'E}^{(a,b,c)}$ is also proper. 
\item If $f_E$ and $f_{E'}$ are openly defined, $f_{E'E}^{(a,b,c)}$ is also openly defined. 
\end{enumerate}
\end{thm}
\begin{proof}
(1) The map $f_{E'E}^{(a,b,c)} \colon \Dom f_{E'E}^{(a,b,c)} \to E'$ 
is factorized as follows: 
\begin{align*}
\Dom f_{E'E}^{(a,b,c)} 
&= \bigcap_{i=0}^{a+b} f^{-i}(E) \cap \bigcap_{i'=a}^{a+b+c} f^{-i'}(E') \\
&\qquad\xrightarrow{f_E^a} 
\bigcap_{i=0}^b f^{-i}(E) \cap \bigcap_{i'=0}^{b+c} f^{-i'}(E') 
\hookrightarrow \bigcap_{i'=0}^{b+c} f^{-i'}(E')
\xrightarrow{f_{E'}^{b+c}} E'. 
\end{align*}
Let us prove that the three maps in this factorization are all proper. 

Since $f_{E'} \colon E' \partialto E'$ is proper, the third map $f_{E'}^{b+c}$ is proper.

The first map $f_E^a$ fits into the following pullback diagram: 
\[
\begin{tikzcd}
\ds \bigcap_{i=0}^{a+b} f^{-i}(E) \cap \bigcap_{i'=a}^{a+b+c} f^{-i'}(E') \arrow[r, "f_E^a"] \arrow[d, hook] & 
\ds \bigcap_{i=0}^b f^{-i}(E) \cap \bigcap_{i'=0}^{b+c} f^{-i'}(E') \arrow[d, hook] \\
\ds \bigcap_{i=0}^a f^{-i}(E) \arrow[r, "f_E^a"] & E.
\end{tikzcd}
\]
Since $f_E \colon E \partialto E$ is proper, 
the bottom map is proper. Hence the top map is also proper (Lemma~\ref{lem:proper-base-change}). 

The second map fits into the following pullback diagram: 
\[
\begin{tikzcd}
\ds \bigcap_{i=0}^b f^{-i}(E) \cap \bigcap_{i'=0}^{b+c} f^{-i'}(E') \arrow[r, hook] \arrow[d, hook] & 
\ds \bigcap_{i'=0}^{b+c} f^{-i'}(E') \arrow[d, hook] \\
\ds \bigcap_{i=0}^b f^{-i}(E) \arrow[r, hook] & 
f^{-b}(E).
\end{tikzcd}
\]
To see that the top map is proper, it suffices to see that the bottom map is proper (Lemma~\ref{lem:proper-base-change}). 
Consider the following commutative diagram: 
\[
\begin{tikzcd}
\ds \bigcap_{i=0}^b f^{-i}(E) \arrow[r, hook] \arrow[dr, "f_E^b \quad" below] & 
f^{-b}(E) \arrow[d, "f^b"] \\
& E. 
\end{tikzcd}
\]
Since $f_E \colon E \partialto E$ is proper, the diagonal map is proper. 
By Lemma~\ref{lem:proper-composition-cancellation}~(2), the top map is also proper. 

(2) The inclusion map $\Dom f_{E'E}^{(a,b,c)} \hookrightarrow E$ is factorized as follows: 
\begin{align*}
\Dom f_{E'E}^{(a,b,c)} 
= \bigcap_{i=0}^{a+b} f^{-i}(E) \cap \bigcap_{i'=a}^{a+b+c} f^{-i'}(E')
\hookrightarrow \bigcap_{i=0}^{a+b} f^{-i}(E)
\hookrightarrow E.
\end{align*}
Let us prove that the two maps in this factorization are open inclusions. 

Since $f_E \colon E \partialto E$ is openly defined, the second map is an open inclusion.

Let us prove that the first map is an open inclusion. 
Consider the following pullback diagram: 
\[
\begin{tikzcd}
\ds \bigcap_{i=0}^{a+b} f^{-i}(E) \cap \bigcap_{i'=a}^{a+b+c} f^{-i'}(E') \arrow[r, hook] \arrow[d, hook] & 
\ds \bigcap_{i=0}^{a+b} f^{-i}(E) \arrow[d, hook] \\
\ds \bigcap_{i'=a}^{a+b+c} f^{-i'}(E') \arrow[r, hook] \arrow[d, "f^a" left] & 
f^{-a}(E') \arrow[d, "f^a"] \\
\ds \bigcap_{i'=0}^{b+c} f^{-i'}(E') \arrow[r, hook] &
E'.
\end{tikzcd}
\]
Since $f_{E'} \colon E' \to E'$ is openly defined, 
the bottom map is an open inclusion. 
Thus, the top map is also an open inclusion. 
\end{proof}

\begin{defn}\label{defn:disc-compactifiable}
Let $f \colon X \partialto X$ be a continuous partial self-map 
on a locally compact Hausdorff space $X$. 
We say that a locally closed subset $E$ of $X$ is \emph{$f$-compactifiable} 
if the induced partial self-map $f_E \colon E \partialto E$ is proper and openly defined.
\end{defn}

\begin{rmk}
In the above definition, $f$-compactifiable subsets are assumed to be locally closed in $X$ 
so that they are locally compact Hausdorff (cf.\ \cite[Ch.~I, \S 9, Props.~12 and 13]{Bou71}). 
\end{rmk}

Let $\widetilde{\CptifSub}_f$ (resp.\ $\CptifSub_f$) be the full category of $\widetilde{\Sub}_f$ 
(resp.\ $\Sub_f$) consisting of $f$-compactifiable subsets of $X$. 
By Proposition~\ref{prop:disc-plus-2} and Theorem~\ref{thm:disc-conley-2}, 
the functor $\widetilde{\Conl}_f \colon \widetilde{\Sub}_f \to \End(\Set_\ast)$ 
(resp.\ $\Conl_f \colon \Sub_f \to \Sz(\Set_\ast)$) induces a functor from 
$\widetilde{\CptifSub}_f$ to $\End(\CHaus_\ast)$ (resp.\ from $\CptifSub_f$ to $\Sz(\CHaus_\ast)$). 

\begin{defn}
Let $f \colon X \partialto X$ be a continuous partial self-map 
on a locally compact Hausdorff space $X$. 
We use the same notations $\widetilde{\Conl}_f \colon \widetilde{\CptifSub}_f \to \End(\CHaus_\ast)$ 
and $\Conl_f \colon \CptifSub_f \to \Sz(\CHaus_\ast)$ for the functors defined above. 
\end{defn}

\section{The Conley index of isolated $f$-invariant subsets}\label{sect:disc-isolated}

\begin{defn}
Let $f \colon X \partialto X$ be a partial self-map on a set $X$. 
We say that a subset $S$ of $X$ is \emph{$f$-invariant} if $S \subset \Dom f$ and $f(S) = S$.
\end{defn}

\begin{defn}
Let $f \colon X \partialto X$ be a partial self-map on a set $X$. 
Let $E$ be a subset of $X$. 
We define the \emph{$f$-invariant part} $I_f(E)$ of $E$ by 
\[
I_f(E) = \bigcap_{a,b \in \N} f^a \left( \bigcap_{i=0}^{a+b} f^{-i}(E) \right).
\]
\end{defn}

\begin{rmk}\label{rmk:disc-decreasing}
The family 
\[
\left( f^a \left( \bigcap_{i=0}^{a+b} f^{-i}(E) \right) \right)_{a,b \in \N}
\]
is decreasing in the following sense: 
for any $a,b,a',b' \in \N$ satisfying $a \leqslant a'$ and $b \leqslant b'$, we have
\[
f^{a'}\left( \bigcap_{i=0}^{a'+b'} f^{-i}(E) \right) 
\subset f^a\left( \bigcap_{i=0}^{a+b} f^{-i}(E) \right).
\]
\end{rmk}

The above definition of the $f$-invariant part agrees with the usual one, namely: 

\begin{lem}\label{lem:disc-invariant-part}\footnote{This Lemma is wrong. See Correction.}
Let $f \colon X \partialto X$ be a partial self-map on a set $X$. 
Let $E$ be a subset of $X$. Then, $I_f(E)$ is the largest $f$-invariant subset contained in $E$. 
\end{lem}

\begin{proof}
We remark that the largest $f$-invariant subset of $E$ exists. 
Indeed, if $(S_i)_{i \in I}$ is a family of $f$-invariant subsets of $E$, 
the union $\bigcup_{i \in I} S_i$ is also $f$-invariant. 

Let us prove that $I_f(E)$ is $f$-invariant. 
We have 
$I_f(E) \subset \Dom f$ and $I_f(E) \subset E$. 
By Remark~\ref{rmk:disc-decreasing}, we have
\begin{align*}
f(I_f(E)) 
&= f \left( \bigcap_{a,b \in \N} f^a\left( \bigcap_{i=0}^{a+b+1} f^{-i}(E) \right) \right) \\
&= \bigcap_{a,b \in \N} f^{a+1}\left( \bigcap_{i=0}^{a+b+1} f^{-i}(E) \right) 
= I_f(E). 
\end{align*}

Let us prove that every $f$-invariant subset of $E$ is contained in $I_f(E)$. 
It suffices to see that, if $S$ is an $f$-invariant subset of $E$, then 
\[
S \subset f^a\left( \bigcap_{i=0}^{a+b} f^{-i}(E) \right)
\]
holds for any $a,b \in \N$. 
Take any $x \in S$. 
Since $S \subset f^a(S)$, there exists $x' \in S$ such that $f^a(x') = x$. 
For each $i \in \N$ with $0 \leqslant i \leqslant a+b$, we have 
$x' \in f^{-i}(E)$ since $f^i(S) \subset S \ (\subset E)$. Thus, 
\[
x = f^a(x') \in f^a \left( \bigcap_{i=0}^{a+b} f^{-i}(E) \right). \qedhere
\]
\end{proof}

\begin{defn}\label{defn:disc-isolating}
Let $f \colon X \partialto X$ be a continuous partial self-map 
on a locally compact Hausdorff space $X$. 
Let $S$ be an $f$-invariant subset of $X$. 
\begin{enumerate}
\item A neighbourhood $E$ of $S$ is called \emph{isolating} if 
the following three conditions are satisfied: 
\begin{itemize}
\item $E$ is relatively compact. 
\item $\overline{E} \subset \Dom f$. 
\item $S = I_F(\overline{E})$.
\end{itemize} 
Here, $\overline{E}$ denotes the closure of $E$. 
\item $S$ is called \emph{isolated} if it admits an isolating neighbourhood. 
\end{enumerate}
\end{defn}

\begin{rmk}\label{rmk:disc-isolating-closure-smaller}
If $E$ is an isolating neighbourhood of $S$, 
the closure $\overline{E}$ is also isolating. 
If $E$ is an isolating neighbourhood of $S$ 
and $E'$ is a neighbourhood of $S$ contained in $E$,
then $E'$ is also isolating. 
\end{rmk}

\begin{lem}\label{lem:disc-compact}
Let $f \colon X \partialto X$ be a continuous partial self-map 
on a locally compact Hausdorff space $X$. 
Let $S$ be an isolated $f$-invariant subset of $X$. 
Then, $S$ is compact. 
\end{lem}
\begin{proof}
Take an isolating neighbourhood $K$ of $S$. 
By Remark~\ref{rmk:disc-isolating-closure-smaller}, we may assume that $K$ is compact. 
We have 
\[
S = \bigcap_{a,b \in \N} f^a \left( \bigcap_{i=0}^{a+b} f^{-i}(K) \right). 
\]
Observe that, for each $a,b \in \N$, 
\[
f^a \left( \bigcap_{i=0}^{a+b} f^{-i}(K) \right)
\]
is compact and closed in $K$. Thus, $S$ is compact. 
\end{proof}

\begin{thm}\label{thm:disc-isolating-1}
Let $f \colon X \partialto X$ be a continuous partial self-map 
on a locally compact Hausdorff space $X$. 
Let $S$ be an isolated $f$-invariant subset of $X$. 
Then, for any two isolating neighbourhoods $E$ and $E'$ of $S$, we have $E \sim_f E'$. 
\end{thm}
\begin{proof}
Let $K = \overline{E}$ be the closure of $E$ and $U = (E')^\circ$ be the interior of $E'$. 
We have 
\[
(K \smallsetminus U) 
\cap \bigcap_{a,b \in \N} f^a \left( \bigcap_{i=0}^{a+b} f^{-i}(K) \right) 
= (K \smallsetminus U) \cap S 
= \varnothing. 
\]
By the compactness of $K$ and Remark~\ref{rmk:disc-decreasing}, 
there exist $a_0, b_0 \in \N$ such that 
\[
(K \smallsetminus U) \cap f^{a_0} \left( \bigcap_{i=0}^{a_0+b_0} f^{-i}(K) \right) 
= \varnothing. 
\]
This can be rephrased as 
\[
\bigcap_{i=0}^{a_0+b_0} f^{-i}(K) \subset f^{-a_0}(U). 
\]
In particular, 
\[
\bigcap_{i=0}^{a_0+b_0} f^{-i}(E) \subset f^{-a_0}(E'). 
\]
We can similarly prove that there exist $a'_0, b'_0 \in \N$ with $a'_0 \leqslant b'_0$ such that 
\[
\bigcap_{i'=0}^{a'_0+b'_0} f^{-i'}(E') \subset f^{-a'_0}(E). \qedhere
\]
\end{proof}

\begin{defn}\label{defn:disc-index-neighbourhood}
Let $f \colon X \partialto X$ be a continuous partial self-map 
on a locally compact Hausdorff space $X$. 
Let $S$ be an $f$-invariant subset of $X$. 
We say that a neighbourhood $E$ of $S$ is an \emph{index neighbourhood} if 
it is isolating and $f$-compactifiable. 
\end{defn}

\begin{thm}\label{thm:disc-isolating-2}
Let $f \colon X \partialto X$ 
be a continuous partial self-map on a locally compact Hausdorff space $X$. 
Let $S$ be an isolated $f$-invariant subset of $X$. 
Then, the set of index neighbourhoods of $S$ forms a neighbourhood base for $S$ 
(in particular, there exists an index neighbourhood of $S$). 
\end{thm}

\begin{proof}
Take any neighbourhood $N$ of $S$. 
Since $S$ is compact by Lemma~\ref{lem:disc-compact} and $X$ is locally compact Hausdorff, 
we can take a compact neighbourhood $K$ of $S$ contained in $N$. 
Let $U = K^\circ$ the interior of $K$. 
By Remark~\ref{rmk:disc-isolating-closure-smaller}, we may assume that 
$K$ and $U$ are isolating neighbourhoods of $S$. 
By Theorem~\ref{thm:disc-isolating-1}, we have $K \sim_f U$. 
Take $(a,b,c) \in A_f(K, U)$ arbitrarily, and put 
\[
E = \bigcap_{i=0}^{a+b} f^{-i}(K) \cap \bigcap_{i'=a}^{a+b+c} f^{-i'}(U). 
\]
By Remark~\ref{rmk:disc-isolating-closure-smaller}, $E$ is an isolating neighbourhood of $S$. 
We have $E \subset N$. 
Since $K \subset \Dom f$, we see that $K \cap f^{-1}(K)$ is closed in $K$, hence compact. 
Thus, the induced partial self-map $f_K \colon K \partialto K$ is proper 
(Lemma~\ref{lem:proper-source-compact}). 
Since $U \subset \Dom f$, we see that $U \cap f^{-1}(U)$ is open in $U$, i.e.\ 
the induced partial self-map $f_U \colon U \partialto U$ is openly defined. 
Also, $E$ is locally closed in $X$. 
Now, it is enough to prove the following lemma: 

\begin{lem}\label{lem:disc-compactifiable-construction}
Let $f \colon X \partialto X$ be a continuous partial self-map on a topological space $X$. 
Let $E$ and $E'$ be two subsets of $X$ and $(a,b,c) \in A_f(E, E')$. 
Assume that $f_E \colon E \partialto E$ is proper and 
$f_{E'} \colon E' \partialto E'$ is openly defined. Put 
\[
E'' = \bigcap_{i=0}^{a+b} f^{-i}(E) \cap \bigcap_{i'=a}^{a+b+c} f^{-i'}(E').
\]
Then, the induced partial self-map $f_{E''} \colon E'' \partialto E''$ is proper and openly defined. 
\end{lem}

\begin{rmk}
In the setting of Lemma~\ref{lem:disc-compactifiable-construction}, 
$E''$ satisfies $E'' \sim_f E$ (and hence, $E'' \sim_f E'$). 
Indeed, we have $E'' \subset E$ and 
\[
\bigcap_{i=0}^{a+2b+c} f^{-i}(E) \subset E''. 
\]
\end{rmk}

\begin{proof}[Proof of Lemma~\ref{lem:disc-compactifiable-construction}]
Let us prove that $f_{E''}$ is proper. 
Consider the following commutative diagram:
\[
\begin{tikzcd}
E'' \cap f^{-1}(E'') \arrow[r, "f_{E''}"] \arrow[d, hook] & E'' \arrow[d, hook] \\
E \cap f^{-1}(E) \arrow[r, "f_E"] & E.
\end{tikzcd}
\]
Since the bottom map is proper, in order to see that the top map is proper, 
it is enough to verify that the square is a pullback, i.e.\ 
$f_E^{-1}(E'') = E'' \cap f^{-1}(E'')$ (Lemma~\ref{lem:proper-base-change}). 
We see that
\[
f_E^{-1}(E'')
= \bigcap_{i=0}^{a+b+1} f^{-i}(E) \cap \bigcap_{i'=a+1}^{a+b+c+1} f^{-i'}(E')
\]
and 
\[
E'' \cap f^{-1}(E'') = \bigcap_{i=0}^{a+b+1} f^{-i}(E) \cap \bigcap_{i'=a}^{a+b+c+1} f^{-i'}(E').
\]
Since 
\[
\bigcap_{i=0}^{a+b+1} f^{-i}(E) \subset \bigcap_{i=0}^{a+b} f^{-i}(E) \subset f^{-a}(E'), 
\]
we obtain
\[
\bigcap_{i=0}^{a+b+1} f^{-i}(E) = \bigcap_{i=0}^{a+b+1} f^{-i}(E) \cap f^{-a}(E')
\]
and
\[
\bigcap_{i=0}^{a+b+1} f^{-i}(E) \cap \bigcap_{i'=a+1}^{a+b+c+1} f^{-i'}(E')
= \bigcap_{i=0}^{a+b+1} f^{-i}(E) \cap \bigcap_{i'=a}^{a+b+c+1} f^{-i'}(E').
\]
We have thus proved $f_E^{-1}(E'') = E'' \cap f^{-1}(E'')$.

Let us prove that $f_{E''}$ is openly defined. 
Consider the following diagram: 
\[
\begin{tikzcd}
E'' \cap f^{-1}(E'') \arrow[r, hook] \arrow[d, "f^{a+b+c}" left] & E'' \arrow[d, "f^{a+b+c}"] \\
E' \cap f^{-1}(E') \arrow[r, hook] & E'.
\end{tikzcd}
\]
Since $E' \cap f^{-1}(E')$ is open in $E'$, in order to see $E'' \cap f^{-1}(E'')$ is open in $E''$, it is enough to verify that the square is a pullback, i.e.\ 
$E'' \cap f^{-(a+b+c)}(E' \cap f^{-1}(E')) = E'' \cap f^{-1}(E'')$. 
We see that 
\[
E'' \cap f^{-(a+b+c)}(E' \cap f^{-1}(E'))
= \bigcap_{i=0}^{a+b} f^{-i}(E) \cap \bigcap_{i'=a}^{a+b+c+1} f^{-i'}(E')
\]
and 
\[
E'' \cap f^{-1}(E'') = \bigcap_{i=0}^{a+b+1} f^{-i}(E) \cap \bigcap_{i'=a}^{a+b+c+1} f^{-i'}(E').
\]
Since 
\begin{align*}
\bigcap_{i'=a}^{a+b+c+1} f^{-i'}(E')
&\subset f^{-(a+1)} \left( \bigcap_{j'=0}^{b+c} f^{-j'}(E') \right) \\
&\subset f^{-(a+1)} (f^{-b}(E)) = f^{-(a+b+1)}(E), 
\end{align*}
we obtain
\[
\bigcap_{i'=a}^{a+b+c+1} f^{-i'}(E') = f^{-(a+b+1)}(E) \cap \bigcap_{i'=a}^{a+b+c+1} f^{-i'}(E')
\]
and
\[
\bigcap_{i=0}^{a+b} f^{-i}(E) \cap \bigcap_{i'=a}^{a+b+c+1} f^{-i'}(E')
= \bigcap_{i=0}^{a+b+1} f^{-i}(E) \cap \bigcap_{i'=a}^{a+b+c+1} f^{-i'}(E').
\]
We have thus proved $E'' \cap f^{-(a+b+c)}(E' \cap f^{-1}(E')) = E'' \cap f^{-1}(E'')$. 
\end{proof}

The proof of Theorem~\ref{thm:disc-isolating-2} is completed. 
\end{proof}

\begin{defn}
Let $f \colon X \partialto X$ be a continuous partial self-map 
on a locally compact Hausdorff space $X$. 
Let $S$ be an isolated $f$-invariant subset of $X$. 
We define the \emph{Conley index of $S$ relative to $f$} to be 
$\Conl_f(E)$, where $E$ is an index neighbourhood of $S$. 
\end{defn}

By Theorems~\ref{thm:disc-isolating-1} and \ref{thm:disc-isolating-2}, 
the full subcategory of $\CptifSub_f$ consisting of all index neighbourhoods of $S$
is equivalent to the category with one object and one morphism. 
Thus, the Conley index $\Conl_f(E)$ is well-defined as an object of $\Sz(\CHaus_\ast)$ 
up to unique isomorphism.

\begin{ex}\label{ex:disc-examples}
There are $\sim_f$-equivalence classes of $f$-compactifiable subsets 
that cannot be obtained from Theorems~\ref{thm:disc-isolating-1} and \ref{thm:disc-isolating-2}:
\begin{enumerate}
\item Fix $\theta \in \R/2\pi\Z$. Define a continuous map $f \colon \R^2 \to \R^2$ by 
\[
f(x,y) = (x \cos \theta - y \sin \theta, x \sin \theta + y \cos \theta).
\]
For each $r \in \R_{\geqslant 0}$, 
\[
U_r = \{ (x,y) \in \R^2 \mid x^2+y^2 < r^2 \}
\]
and
\[
K_r = \{ (x,y) \in \R^2 \mid x^2+y^2 \leqslant r^2 \}
\]
are both $f$-compactifiable subsets of $\R^2$. 
\item Define a continuous map $f \colon \R^2 \to \R^2$ by $f(x,y) = (x, y+1)$. 
For each continuous function $\varphi \colon \R \to \R$, 
\[
E_\varphi = \{ (x,y) \in \R^2 \mid y < \varphi(x) \}
\]
is an $f$-compactifiable subset of $\R^2$. 
Note that a pair of two continuous functions $\varphi, \psi \colon \R \rightrightarrows \R$ 
satisfies $E_\varphi \sim_f E_\psi$ if and only if 
$\varphi - \psi$ is a bounded function. 
\end{enumerate}
\end{ex}

\part{The continuous time case}

\section{A continuous time analogue of the Szymczak category}\label{sect:cont-Szymczak}

\begin{defn}
\leavevmode
\begin{enumerate}
\item Let $(X, x_0)$ be a based set and 
\[
F \colon (\R_{\geqslant 0} \times X, \R_{\geqslant 0} \times \{ x_0 \}) \to (X, x_0)
\]
a map. For each $t \in \R_{\geqslant 0}$, 
we write $f^t \colon (X, x_0) \to (X, x_0)$ for the based map defined by $f^t(x) = F(t, x)$. 
We say that $F$ is a \emph{based semiflow} on $X$ if the following two conditions are satisfied: 
\begin{itemize}
\item $f^t f^u = f^{t+u}$ ($t,u \in \R_{\geqslant 0}$). 
\item $f^0 = \id_X$. 
\end{itemize}
\item Let $(X, x_0)$ be a based compact Hausdorff space. 
We say that a based semiflow $F$ on $X$ is \emph{continuous} if 
it is continuous as a map from $\R_{\geqslant 0} \times X$ to $X$. 
\end{enumerate}
\end{defn}

We write $\SFlow(\Set_\ast)$ for the category of based semiflows on sets, i.e.\ 
\begin{itemize}
\item An object of $\SFlow(\Set_\ast)$ is a based semiflow on a set. 
\item A morphism from a based semiflow $F$ on $(X, x_0)$ to a based semiflow $G$ on $(Y, y_0)$ in 
$\SFlow(\Set_\ast)$ is a based map $\varphi \colon (X, x_0) \to (Y, y_0)$ such that 
$\varphi f^t = g^t \varphi$ for any $t \in \R_{\geqslant 0}$. 
\end{itemize}
Similarly, we write $\SFlow(\CHaus_\ast)$ 
for the category of based continuous semiflows on compact Hausdorff spaces, i.e.\ 
\begin{itemize}
\item An object of $\SFlow(\Set_\ast)$ is a based continuous semiflow on a compact Hausdorff space. 
\item A morphism from a based continuous semiflow $F$ on $(X, x_0)$ 
to a based continuous semiflow $G$ on $(Y, y_0)$ in $\SFlow(\CHaus_\ast)$ 
is a based continuous map $\varphi \colon (X, x_0) \to (Y, y_0)$ such that 
$\varphi f^t = g^t \varphi$ for any $t \in \R_{\geqslant 0}$. 
\end{itemize}

To define the point-set level Conley index in the continuous time setting, 
we need an analogue of the Szymczak category, 
which uses the category of based semiflows instead of the category of endomorphisms. 

\begin{defn}
\leavevmode
\begin{enumerate}
\item We define a category $\Sz_\cont(\Set_\ast)$ as follows: 
\begin{itemize}
\item An object of $\Sz_\cont(\Set_\ast)$ is a based semiflow on a set. 
\item For each two based semiflows $F$ and $G$ on sets, 
the hom-set $\Sz_\cont(\Set_\ast)(F, G)$ is given by
\[
\Sz_\cont(\Set_\ast)(F, G) = \left( \SFlow(\Set_\ast)(F, G) \times \R_{\geqslant 0} \right) / \sim,
\]
where $(\varphi, t) \sim (\varphi', t')$ if and only if there exists $s \in \R_{\geqslant 0}$ 
such that $\varphi f^{t'+s} = \varphi' f^{t+s}$ 
(or equivalently, $g^{t'+s} \varphi = g^{t+s} \varphi'$). 
\item The composition of two morphisms 
$\overline{(\varphi, t)} \in \Sz_\cont(\Set_\ast)(F, G)$ and 
$\overline{(\psi, u)} \in \Sz_\cont(\Set_\ast)(G, H)$ 
is given by
$\overline{(\psi, u)} \circ \overline{(\varphi, t)} = \overline{(\psi \varphi, t + u)}$, 
which does not depend on the choice of the representatives. 
\end{itemize}
\item We define a category $\Sz_\cont(\CHaus_\ast)$ as follows: 
\begin{itemize}
\item An object of $\Sz_\cont(\CHaus_\ast)$ 
is a based continuous semiflow on a compact Hausdorff space. 
\item For each two based continuous semiflows $F$ and $G$ on compact Hausdorff spaces, 
the hom-set $\Sz_\cont(\CHaus_\ast)(F, G)$ is given by
\[
\Sz_\cont(\CHaus_\ast)(F, G) = 
\left( \SFlow(\CHaus_\ast)(F, G) \times \R_{\geqslant 0} \right) / \sim,
\]
where $(\varphi, t) \sim (\varphi', t')$ if and only if there exists $s \in \R_{\geqslant 0}$ 
such that $\varphi f^{t'+s} = \varphi' f^{t+s}$ 
(or equivalently, $g^{t'+s} \varphi = g^{t+s} \varphi'$). 
\item The composition of two morphisms 
$\overline{(\varphi, t)} \in \Sz_\cont(\Set_\ast)(F, G)$ and 
$\overline{(\psi, u)} \in \Sz_\cont(\Set_\ast)(G, H)$ 
is given by
$\overline{(\psi, u)} \circ \overline{(\varphi, t)} = \overline{(\psi \varphi, t + u)}$, 
which does not depend on the choice of the representatives. 
\end{itemize}
\end{enumerate}
\end{defn}

\section{Partial semiflows on sets}\label{sect:cont-set}

\begin{defn}
Let $X$ be a set and 
$F \colon \R_{\geqslant 0} \times X \partialto X$ a partial map. 
For each $t \in \R_{\geqslant 0}$, 
we write $f^t \colon X \partialto X$ for the partial map defined by 
\[
\Dom f^t = \{ x \in X \mid (t,x) \in \Dom F \}, \qquad f^t(x) = F(t,x). 
\]
We say that $F$ is a \emph{partial semiflow on $X$} if the following two conditions are satisfied: 
\begin{itemize}
\item $f^t f^u = f^{t+u}$ ($t,u \in \R_{\geqslant 0}$). 
\item $f^0 = \id_X$. 
\end{itemize}
\end{defn}

\begin{defn}
We define a category $\SFlow(\Set^\partial)$ as follows: 
\begin{itemize}
\item An object of $\SFlow(\Set^\partial)$ is a partial semiflow on a set. 
\item A morphism from a partial semiflow $F$ on $X$ to a partial semiflow $G$ on $Y$ in 
$\SFlow(\Set^\partial)$ is a partial map $\varphi \colon X \partialto Y$ such that 
$\varphi f^t = g^t \varphi$ for any $t \in \R_{\geqslant 0}$. 
\end{itemize}
\end{defn}

\begin{defn}
We define a functor 
$({-})^+ \colon \SFlow(\Set^\partial) \to \SFlow(\Set_\ast)$ 
by assigning to each partial semiflow $F$ on $X$ the based semiflow $F^+$ on $X^+$ defined by 
\[
F^+(t,x) = 
\begin{cases}
F(t,x) & \text{(if $(t,x) \in \Dom F$)}, \\ 
\ast & \text{(otherwise)}. 
\end{cases}
\]
\end{defn}

\begin{prop}\label{prop:cont-plus-1}
$({-})^+ \colon \SFlow(\Set^\partial) \to \SFlow(\Set_\ast)$ is an equivalence of categories. 
\end{prop}
\begin{proof}
Define a functor $U \colon \SFlow(\Set_\ast) \to \SFlow(\Set^\partial)$ by 
assigning to each based semiflow $F$ on $(X, x_0)$ the partial semiflow $UF$ on 
$X \smallsetminus \{ x_0 \}$ defined by 
\[
\Dom (UF) = F^{-1}(X \smallsetminus \{ x_0 \}), \qquad (UF)(t,x) = F(t,x). 
\]
Then, $U$ is an inverse of $({-})^+$. 
\end{proof}

\begin{defn}\label{defn:cont-induced}
Let $F \colon \R_{\geqslant 0} \times X \partialto X$ be a partial semiflow on a set $X$. 
For each subset $E$ of $X$, we define a partial semiflow 
$F_E \colon \R_{\geqslant 0} \times E \partialto E$ on $E$ by 
\[
\Dom F_E = \left\{ (t,x) \in \R_{\geqslant 0} \times X \ \middle| \ 
x \in \bigcap_{s \in [0,t]} f^{-s}(E) \right\},
\qquad F_E(t,x) = F(t,x), 
\]
and call it the \emph{induced partial semiflow on $E$}. 
\end{defn}

In this paper, by $f_E^t \ (t \in \R_{\geqslant 0})$, we always mean $(f_E)^t$ 
(i.e.\ the time-$t$ partial map for the partial semiflow $F_E$), not $(f^t)_E$. 
Thus,
\[
\Dom f_E^t = \{ x \in X \mid (t,x) \in \Dom F_E \} 
= \bigcap_{s \in [0,t]} f^{-s}(E), \qquad f_E^t(x) = f^t(x). 
\]

\begin{defn}
Let $F \colon \R_{\geqslant 0} \times X \partialto X$ be a partial semiflow on a set $X$. 
Let $E$ and $E'$ be two subsets of $X$. 
\begin{enumerate}
\item We say that a triple $(a,b,c) \in \R_{\geqslant 0}^3$ is 
\emph{$(E, E')$-admissible relative to $F$} if 
\[
\bigcap_{t \in [0, a+b]} f^{-t}(E) \subset f^{-a}(E'), \qquad 
\bigcap_{t' \in [0, b+c]} f^{-t'}(E') \subset f^{-b}(E). 
\]
We write $A_F(E, E') \subset \R_{\geqslant 0}^3$ 
for the set of all $(E, E')$-admissible triples relative to $F$. 
\item We use the notation $E \sim_F E'$ to mean $A_F(E, E') \neq \varnothing$. 
\end{enumerate}
\end{defn}

\begin{rmk}\label{rmk:cont-sim-symmetric}
The relation $E \sim_F E'$ holds if and only if there exist 
$a, a', b, b' \in \R_{\geqslant 0}$ such that 
\[
\bigcap_{t \in [0, a+b]} f^{-t}(E) \subset f^{-a}(E'), \qquad 
\bigcap_{t' \in [0, a'+b']} f^{-t'}(E') \subset f^{-a'}(E). 
\]
\end{rmk}

\begin{lem}\label{lem:cont-sim-equivalence-relation}
Let $F \colon \R_{\geqslant 0} \times X \partialto X$ be a partial semiflow on a set $X$. 
\begin{enumerate}
\item For any subset $E$ of $X$, we have 
$A_F(E, E) = \R_{\geqslant 0}^3$. 
In particular, $(0,0,0) \in A_F(E, E)$. 
\item Let $E, E'$, and $E''$ be three subsets of $X$. 
If $(a,b,c) \in A_F(E, E')$ and $(a',b',c') \in A_F(E', E'')$, 
then $(a+a',b+b',c+c') \in A_F(E, E'')$. 
\end{enumerate}
\end{lem}

\begin{proof}
The same as the proof of Lemma~\ref{lem:disc-sim-equivalence-relation}. 
\end{proof}

It follows from Remark~\ref{rmk:cont-sim-symmetric} and 
Lemma~\ref{lem:cont-sim-equivalence-relation} that $\sim_F$ is an equivalence relation 
on the set of all subsets of $X$. 

\begin{defn}
Let $F \colon \R_{\geqslant 0} \times X \partialto X$ be a partial semiflow on a set $X$. 
Let $E$ and $E'$ be two subsets of $X$. 
For each $(a,b,c) \in A_F(E, E')$, define a partial map 
$f_{E'E}^{(a,b,c)} \colon E \partialto E'$ by 
\begin{align*}
\Dom f_{E'E}^{(a,b,c)} 
&= \bigcap_{t \in [0, a+b]} f^{-t}(E) \cap \bigcap_{t' \in [a, a+b+c]} f^{-t'}(E'), \\
f_{E'E}^{(a,b,c)}(x) &= f^{a+b+c}(x). 
\end{align*}
\end{defn}

\begin{thm}\label{thm:cont-conley-1}
Let $F \colon \R_{\geqslant 0} \times X \partialto X$ be a partial semiflow on a set $X$. 
\begin{enumerate}
\item Let $E$ be a subset of $X$. 
Then, for any $(a,b,c) \in \R_{\geqslant 0}^3$, we have 
\[
f_{EE}^{(a,b,c)} = f_E^{a+b+c}.
\] 
In particular, $f_{EE}^{(0,0,0)} = \id_E$. 
\item Let $E, E'$, and $E''$ be three subsets of $X$. 
Then, for any $(a,b,c) \in A_F(E, E')$ and $(a',b',c') \in A_F(E', E'')$, 
we have 
\[
f_{E''E'}^{(a',b',c')} \circ f_{E'E}^{(a,b,c)} = f_{E''E}^{(a+a',b+b',c+c')}. 
\]
\item Let $E$ and $E'$ be two subsets of $X$. 
Then, for any $(a,b,c) \in A_F(E, E')$ and any $t \in \R_{\geqslant 0}$, we have 
\[
f_{E'E}^{(a,b,c)} \circ f_E^t = f_{E'}^t \circ f_{E'E}^{(a,b,c)}. 
\]
\item Let $E$ and $E'$ be two subsets of $X$. 
Then, for any $(a,b,c), (a',b',c') \in A_F(E, E')$, we have 
\[
f_{E'E}^{(a,b,c)} \circ f_E^{a'+b'+c'} = f_{E'E}^{(a',b',c')} \circ f_E^{a+b+c}. 
\]
\end{enumerate}
\end{thm}
\begin{proof}
The same as the proof of Theorem~\ref{thm:disc-conley-1}. 
\end{proof}

\begin{defn}
Let $F \colon \R_{\geqslant 0} \times X \partialto X$ be a partial semiflow on a set $X$. 
\begin{enumerate}
\item We define a small category $\widetilde{\Sub}_F$ as follows: 
\begin{itemize}
\item An object of $\widetilde{\Sub}_F$ is a subset of $X$. 
\item For each two subsets $E$ and $E'$ of $X$, 
the hom-set $\widetilde{\Sub}_F(E, E')$ is given by $\widetilde{\Sub}_F(E, E') = A_F(E, E')$. 
\item The composition of two morphisms is given by addition. 
\end{itemize}
\item We define a functor $\widetilde{\Conl}_F \colon \widetilde{\Sub}_F \to \SFlow(\Set_\ast)$ 
as follows: 
\begin{itemize}
\item For each subset $E$ of $X$, we put 
$\widetilde{\Conl}_F(E) = F_E^+$. 
\item For each two subsets $E$ and $E'$ of $X$, we assign to $(a,b,c) \in A_F(E, E')$ 
the morphism $(f_{E'E}^{(a,b,c)})^+ \colon F_E^+ \to F_{E'}^+$.
\end{itemize}
\end{enumerate}
\end{defn}

As in Definition~\ref{defn:disc-functor-1}, 
we can see that $\widetilde{\Sub}_F$ is a category and $\widetilde{\Conl}_F$ is a functor. 

\begin{defn}\label{defn:cont-functor-2}
Let $F \colon \R_{\geqslant 0} \times X \partialto X$ be a partial semiflow on a set $X$. 
\begin{enumerate}
\item We define a small category $\Sub_F$ as follows: 
\begin{itemize}
\item An object of $\Sub_F$ is a subset of $X$. 
\item For each two subsets $E$ and $E'$ of $X$, 
the hom-set $\Sub_F(E, E')$ is given by 
\[
\Sub_F(E, E') = 
\begin{cases}
\ast & \text{(if $E \sim_F E'$)}, \\
\varnothing & \text{(otherwise)}. 
\end{cases}
\]
\end{itemize}
\item We define a functor $\Conl_F \colon \Sub_F \to \Sz_\cont(\Set_\ast)$ as follows: 
\begin{itemize}
\item For each subset $E$ of $X$, we put 
$\Conl_F(E) = F_E^+$. 
\item To each two subsets $E$ and $E'$ of $X$ with $E \sim_F E'$, 
we assign the morphism $f_{E'E}^+ \colon F_E^+ \to F_{E'}^+$ defined by 
\[
f_{E'E}^+ = \overline{\left((f_{E'E}^{(a,b,c)})^+, a+b+c \right)}, 
\]
where $(a,b,c) \in A_F(E, E')$. 
\end{itemize}
\end{enumerate}
\end{defn}

As in Definition~\ref{defn:disc-functor-2}, we can see that $\Conl_F$ is a well-defined functor. 

\begin{rmk}
Since $\sim_F$ is a symmetric relation, every morphism in $\Sub_F$ is an isomorphism. 
\end{rmk}

\section{Continuous partial semiflows on topological spaces}\label{sect:cont-topological-space}

\begin{defn}
Let $X$ be a topological space. We say that a partial semiflow $F$ on $X$ is \emph{continuous} 
if $F$ is continuous as a partial map from $\R_{\geqslant 0} \times X$ to $X$. 
\end{defn}

\begin{defn}
Let $F \colon \R_{\geqslant 0} \times X \partialto X$ 
be a continuous partial semiflow on a topological space $X$. 
\begin{enumerate}
\item We say that $F$ is \emph{finite-time proper} 
if, for any $t \in \R_{\geqslant 0}$, the restriction 
$F|_{[0, t] \times X} \colon [0, t] \times X \partialto X$
is proper. 
\item We say that $F$ is \emph{openly defined} 
if it is openly defined as a continuous partial map from $\R_{\geqslant 0} \times X$ to $X$. 
\end{enumerate}
\end{defn}

\begin{defn}
We define a category $\SFlow(\LCHaus^\partial)$ as follows: 
\begin{itemize}
\item An object of $\SFlow(\LCHaus^\partial)$ is a finite-time proper and openly defined 
partial semiflow on a locally compact Hausdorff space. 
\item A morphism from $F \colon \R_{\geqslant 0} \times X \partialto X$ to
$G \colon \R_{\geqslant 0} \times Y \partialto Y$ in $\SFlow(\LCHaus^\partial)$ 
is a proper and openly defined partial map $\varphi \colon X \partialto Y$ such that 
$\varphi f^t = g^t \varphi$ for any $t \in \R_{\geqslant 0}$. 
\end{itemize}
\end{defn}

The significance of finite-time proper and openly defined partial semiflows is 
that we can define the following functor: 

\begin{defn}
We define a functor 
$({-})^+ \colon \SFlow(\LCHaus^\partial) \to \SFlow(\CHaus_\ast)$ 
by assigning to each finite-time proper and openly defined partial semiflow $F$ on $X$ 
the continuous based semiflow $F^+$ on $X^+$ defined by
\[
F^+(t,x) = 
\begin{cases}
F(t,x) & \text{(if $(t,x) \in \Dom F$)}, \\ 
\ast & \text{(otherwise)}. 
\end{cases}
\]
\end{defn}

Let us verify that $F^+$ is indeed continuous. 
Suppose that $K$ is a closed subset of $X^+$ that does not contain $\ast$. 
For each $t \in \R_{\geqslant 0}$, we have
\[
(F^+)^{-1}(K) \cap ([0, t] \times X) = (F|_{[0, t] \times X})^{-1}(K). 
\]
Since $K$ is a compact subset of $X$ and 
$F|_{[0, t] \times X} \colon \Dom (F|_{[0, t] \times X}) \to X$ is a proper map, 
$(F^+)^{-1}(K) \cap ([0, t] \times X)$ is compact, hence closed in $[0, t] \times X^+$. 
Since $t$ is arbitrary, we conclude that $(F^+)^{-1}(K)$ is closed in $\R_{\geqslant 0} \times X^+$. 
Suppose that $K$ is a closed subset of $X^+$ that contains $\ast$. We have 
\[
(F^+)^{-1}(K) = F^{-1}(K \smallsetminus \{ \ast \}) \cup 
((\R_{\geqslant 0} \times X) \smallsetminus \Dom F) \cup (\R_{\geqslant 0} \times \{ \ast \}). 
\]
Since $K \smallsetminus \{ \ast \}$ is a closed subset of $X$ and 
$\Dom F$ is an open subset of $\R_{\geqslant 0} \times X$, we see that 
$F^{-1}(K \smallsetminus \{ \ast \}) \cup ((\R_{\geqslant 0} \times X) \smallsetminus \Dom F)$ 
is closed in $\R_{\geqslant 0} \times X$. 
Thus, $(F^+)^{-1}(K)$ is closed in $\R_{\geqslant 0} \times X^+$. 

\begin{prop}\label{prop:cont-plus-2}
$({-})^+ \colon \SFlow(\LCHaus^\partial) \to \SFlow(\CHaus_\ast)$ is an equivalence of categories. 
\end{prop}

\begin{proof}
Define a functor $U \colon \SFlow(\CHaus_\ast) \to \SFlow(\LCHaus^\partial)$ by 
assigning to each based continuous semiflow $F$ on $(X, x_0)$ 
the partial semiflow $UF$ on $X \smallsetminus \{ x_0 \}$ defined by 
\[
\Dom (UF) = F^{-1}(X \smallsetminus \{ x_0 \}), \qquad (UF)(t,x) = F(t,x), 
\]
which is finite-time proper and openly defined. 
Then, $U$ is an inverse of $({-})^+$. 

For the sake of completeness, 
let us verify that $UF$ is indeed finite-time proper and openly defined. 
Take any $t \in \R_{\geqslant 0}$. 
We have 
\[
\Dom \left( (UF)|_{[0, t] \times (X \smallsetminus \{ x_0 \})} \right) 
= F^{-1}(X \smallsetminus \{ x_0 \}) \cap ([0, t] \times X). 
\]
Consider the following pullback diagram: 
\[
\begin{tikzcd}
\Dom ((UF)|_{[0, t] \times X}) \arrow[rrr, "(UF)|_{[0, t] \times (X \smallsetminus \{ x_0 \})}"] \arrow[d, hook] & & & X \smallsetminus \{ x_0 \} \arrow[d, hook] \\
{[0,t]} \times X \arrow[rrr, "F"] & & & X.
\end{tikzcd}
\]
The bottom map is proper since 
it is a continuous map from a compact Hausdorff space (Lemma~\ref{lem:proper-source-compact}). 
Thus, the top map is also proper (Lemma~\ref{lem:proper-base-change}). 
Since $X \smallsetminus \{ x_0 \}$ is open in $X$, 
the inverse image $F^{-1}(X \smallsetminus \{ x_0 \}) \ (= \Dom (UF))$ 
is open in $\R_{\geqslant 0} \times X$. 
Since $\Dom (UF)$ is contained in $\R_{\geqslant 0} \times (X \smallsetminus \{ x_0 \})$, 
it is also open in $\R_{\geqslant 0} \times (X \smallsetminus \{ x_0 \})$.
\end{proof}

Let us prove some basic properties of 
finite-time proper partial semiflows and openly defined partial semiflows. 

\begin{lem}\label{lem:F-f}
Let $F \colon \R_{\geqslant 0} \times X \partialto X$ 
be a continuous partial semiflow on a topological space $X$. 
\begin{enumerate}
\item If $F$ is finite-time proper, $f^t$ is proper for any $t \in \R_{\geqslant 0}$. 
\item If $F$ is openly defined, $f^t$ is openly defined for any $t \in \R_{\geqslant 0}$. 
\end{enumerate}
\end{lem}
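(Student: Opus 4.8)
The plan is to realise each $f^t$ directly from the restriction $F|_{[0, t] \times X}$ by slicing at the fixed time $t$. The key object is the slice inclusion $\iota_t \colon X \to \R_{\geqslant 0} \times X$, $x \mapsto (t, x)$, which is continuous by the universal property of the product (its two components are the constant map at $t$ and $\id_X$) and is a closed embedding onto $\{ t \} \times X$, since $\{ t \}$ is closed in $\R_{\geqslant 0}$; the same holds for its corestriction $X \hookrightarrow [0, t] \times X$. Both parts of the lemma rest on the single observation that, by definition of $f^t$ and because $t \in [0, t]$, one has $\Dom f^t = \iota_t^{-1}(\Dom F) = \iota_t^{-1}(\Dom(F|_{[0, t] \times X}))$, with $f^t = (F|_{[0, t] \times X}) \circ \iota_t$ on this domain.

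For (2) I would simply note that $\Dom f^t = \iota_t^{-1}(\Dom F)$ is the preimage of the open set $\Dom F$ under the continuous map $\iota_t$, and is therefore open in $X$; this is exactly what openly definedness of $f^t$ requires.

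For (1), fix $t$ and consider the square
\[
\begin{tikzcd}
\Dom f^t \arrow[r, hook] \arrow[d] & X \arrow[d, "\iota_t"] \\
\Dom(F|_{[0, t] \times X}) \arrow[r, hook] & {[0, t] \times X,}
\end{tikzcd}
\]
in which the left vertical map is $x \mapsto (t, x)$ and the top map is the inclusion $\Dom f^t \hookrightarrow X$. This is a pullback precisely because $\Dom f^t = \iota_t^{-1}(\Dom(F|_{[0, t] \times X}))$. Since $\iota_t$ is a closed embedding it is proper, so its base change along $\Dom(F|_{[0, t] \times X}) \hookrightarrow [0, t] \times X$, namely the left vertical map, is proper by Lemma~\ref{lem:proper-pullback}. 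As $f^t$ factors as the proper map $F|_{[0, t] \times X}$ precomposed with this left vertical map, it is a composite of proper maps and hence proper.

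The only genuinely delicate point is the justification that $\iota_t$ is proper; everything else is bookkeeping. I expect this to follow from the basic properties of proper maps in the appendix (a closed embedding has closed, hence compact, preimages of compact sets and is itself a closed map), together with the care that products and subspaces are formed in the category of compactly generated weak Hausdorff spaces, where $\{ t \} \times X$ is still closed in $[0, t] \times X$ and $\iota_t$ is still a homeomorphism onto it. Once $\iota_t$ is known to be proper, part (2) reduces to continuity of $\iota_t$ and part (1) to a single application of base change plus stability of properness under composition.
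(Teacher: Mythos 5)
Your proposal is correct and takes essentially the same route as the paper: part (2) is verbatim the paper's argument ($\Dom f^t = i^{-1}(\Dom F)$ for the slice inclusion $i(x) = (t,x)$), and for part (1) the paper likewise factors the partial map $f^t$ as the closed (hence proper) inclusion $i \colon X \to [0,t] \times X$ followed by the proper partial map $F|_{[0,t] \times X}$, then invokes the standing fact that composites of proper partial maps are proper. Your explicit pullback square with Lemma~\ref{lem:proper-pullback} is just the unpacking of that composition fact, so the two proofs coincide in substance.
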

\begin{proof}
(1) Let $i \colon X \to [0,t] \times X$ 
be the closed inclusion defined by $i(x) = (t, x)$. 
We can factorize $f^t \colon X \partialto X$ as follows: 
\[
X \xrightarrow{i} [0, t] \times X
\xpartialto{F|_{[0, t] \times X}} X. 
\]
Since $i$ and $F|_{[0, t] \times X}$ are both proper, 
their composite $f^t$ is also proper. 

(2) Let $i \colon X \to \R_{\geqslant 0}\times X$ 
be the closed inclusion defined by $i(x) = (t, x)$. 
Since $\Dom f^t = i^{-1}(\Dom F)$, 
we see that $\Dom f^t$ is open in $X$. 
\end{proof}

\begin{lem}\label{lem:short-time}
Let $F \colon \R_{\geqslant 0} \times X \partialto X$ 
be a continuous partial semiflow on a topological space $X$. 
\begin{enumerate}
\item $F$ is finite-time proper if and only if 
$F|_{[0, \varepsilon] \times X}$ is proper for some $\varepsilon \in \R_{>0}$. 
\item $F$ is openly defined if and only if 
$F|_{[0, \varepsilon] \times X}$ is openly defined for some $\varepsilon \in \R_{>0}$. 
\end{enumerate}
\end{lem}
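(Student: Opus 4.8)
The two ``only if'' directions are immediate: if $F$ is finite-time proper (resp.\ openly defined), then $F|_{[0,\varepsilon] \times X}$ is proper (resp.\ openly defined) for \emph{every} $\varepsilon$, in particular for $\varepsilon = 1$. For the ``if'' directions, the plan is to bootstrap the hypothesis from the single interval $[0,\varepsilon]$ to all of $\R_{\geqslant 0}$ using the semiflow relation $f^{r+c} = f^r f^c$. The crucial point is that this is an equality of \emph{partial} maps, so that in particular $\Dom f^{r+c} = (f^c)^{-1}(\Dom f^r)$; this lets me rewrite the flow for a large time as a composition involving the flow for a small time, and it is what makes all the domain computations match up.

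For (1), set $G = F|_{[0,\varepsilon] \times X}$ and prove by induction on $n$ that $F|_{[0,n\varepsilon] \times X}$ is proper, the base case being the hypothesis. For the inductive step I would cover $[0,(n+1)\varepsilon] \times X$ by the two closed sets $[0,n\varepsilon] \times X$ and $[n\varepsilon,(n+1)\varepsilon] \times X$. The restriction to the first is proper by the inductive hypothesis. For the second, I use the time-translation homeomorphism $(r,x) \mapsto (r + n\varepsilon, x)$ to identify $F|_{[n\varepsilon,(n+1)\varepsilon] \times X}$, up to precomposition with this homeomorphism, with the composite $G \circ (\id_{[0,\varepsilon]} \times f^{n\varepsilon})$; the domains match precisely because of the semiflow identity for domains noted above. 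Now $f^{n\varepsilon}$ is proper, being the composite of the closed inclusion $x \mapsto (n\varepsilon, x)$ with the (inductively) proper map $F|_{[0,n\varepsilon] \times X}$; the map $\id_{[0,\varepsilon]} \times f^{n\varepsilon}$ is the base change of $f^{n\varepsilon}$ along the projection $[0,\varepsilon] \times X \to X$, hence proper by Lemma~\ref{lem:proper-pullback}; and composites of proper partial maps are proper. Thus both restrictions are proper, and gluing them gives that $F|_{[0,(n+1)\varepsilon] \times X}$ is proper. Finally, an arbitrary $t$ lies in some $[0,n\varepsilon]$, and $F|_{[0,t] \times X}$ is the composite of the proper map $F|_{[0,n\varepsilon] \times X}$ with the closed inclusion of its restricted domain, hence proper.

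For (2), first observe that the hypothesis gives that $O_1 := \Dom F \cap ([0,\varepsilon) \times X)$ is open in $\R_{\geqslant 0} \times X$, since $\Dom F \cap ([0,\varepsilon] \times X)$ is open in $[0,\varepsilon] \times X$ and $[0,\varepsilon) \times X$ is open in $\R_{\geqslant 0} \times X$. I then prove by induction that $\Dom F \cap ([0,n\varepsilon) \times X)$ is open, so that the union $\Dom F = \bigcup_n \Dom F \cap ([0,n\varepsilon) \times X)$ is open. Points with time coordinate below $n\varepsilon$ are interior by the inductive hypothesis, so it suffices to treat $(s_0,x_0) \in \Dom F$ with $s_0 \in [n\varepsilon,(n+1)\varepsilon)$. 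I write $s_0 = c + \delta$ with $c < n\varepsilon$ and $\delta \in (0,\varepsilon)$, so that $\Dom f^c$ is open (by the inductive hypothesis, slicing at time $c$) and $(\delta, f^c(x_0)) \in O_1$. On the open set $(c,\infty) \times \Dom f^c$ the map $\eta(s,x) = (s - c, f^c(x))$ is continuous, and $W := \eta^{-1}(O_1) \cap ((c,\infty) \times \Dom f^c)$ is an open neighbourhood of $(s_0,x_0)$; since $O_1 \subset [0,\varepsilon) \times X$ it lies in $[0,(n+1)\varepsilon) \times X$. The identity $\Dom f^s = (f^c)^{-1}(\Dom f^{s-c})$ shows $W \subset \Dom F$, giving the required openness.

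The routine directions and the bookkeeping of domains I would leave to the reader; the step I expect to need the most care is the gluing in (1), namely that a continuous partial map $g$ with $g|_{\Dom g \cap A_i}$ proper for each member $A_i$ of a finite closed cover $A = A_1 \cup A_2$ of the ambient space is itself proper. I would settle this via the compactness characterization of properness (Lemma~\ref{lem:proper-equivalent-4}): the preimage of a compact set is the union of its finitely many (compact) intersections with the $A_i$, hence compact, and a parallel argument gives closedness. The analogous subtlety in (2) is that fibrewise openness of the domains $\Dom f^r$ is \emph{not} by itself enough to conclude open-definedness of $F$; one must produce genuinely product-open neighbourhoods in $\R_{\geqslant 0} \times X$, which is exactly the role played by the auxiliary map $\eta$.
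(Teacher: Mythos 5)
Your proof is correct; part (1) follows essentially the paper's route, while part (2) takes a genuinely different one. In (1), the paper inducts on the slabs $[n\varepsilon,(n+1)\varepsilon]\times X$, writing $F_{n+1}=F_0\circ i\circ F_n\circ(g_{n+1}\times\id_X)$, i.e.\ it \emph{post}-composes with $f^\varepsilon$ via the closed inclusion $i(x)=(\varepsilon,x)$, and then patches finitely many slabs (citing Fact~\ref{fact:proper-equivalent-1}~(iv)); you instead induct on the initial segments $[0,n\varepsilon]\times X$ and \emph{pre}-compose, writing the top slab as $G\circ(\id_{[0,\varepsilon]}\times f^{n\varepsilon})$ up to translation. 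Both hinge on the same domain identity $\Dom f^{r+c}=(f^c)^{-1}(\Dom f^r)$, but your ordering costs one extra ingredient that the paper's ordering avoids: properness of $\id_{[0,\varepsilon]}\times f^{n\varepsilon}$, which you correctly obtain from Lemma~\ref{lem:proper-pullback} as a base change along the projection. Your gluing criterion is sound and in fact slightly redundant: since the codomain $X$ is compactly generated weak Hausdorff, condition (vii) of Lemma~\ref{lem:proper-equivalent-3} (compact preimages of compacta) is by itself equivalent to properness, so the ``parallel argument for closedness'' you mention is unnecessary. In (2), the paper re-runs the slab factorization to show each $F_n=F|_{[n\varepsilon,(n+1)\varepsilon]\times X}$ is openly defined (composites of openly defined partial maps are openly defined) and then patches $\Dom F$ over the locally finite closed cover by slabs, whereas you construct explicit product-open neighbourhoods inside the half-open segments $[0,n\varepsilon)\times X$ by pulling back the small-time domain $O_1$ along the auxiliary map $\eta(s,x)=(s-c,f^c(x))$. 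Your version trades the closed-cover patching (where local finiteness must be invoked) for a direct open-cover argument; it is a little longer but more self-contained, and your closing observation --- that openness of each slice $\Dom f^t$ alone would not suffice, so one must produce neighbourhoods open in the product --- names exactly the subtlety that the paper's factorization handles implicitly. The parameter bookkeeping in your inductive step (choosing $s_0=c+\delta$ with $\delta\in(s_0-n\varepsilon,\varepsilon)$, so that $0<c<n\varepsilon$ and $(\delta,f^c(x_0))\in O_1$) does check out.
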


\begin{proof}
(1) The `only if' part is trivial. Let us prove the `if' part. 
For each $n \in \N$, put 
\[
F_n = F|_{[n\varepsilon, (n+1)\varepsilon] \times X} \colon 
[n\varepsilon, (n+1)\varepsilon] \times X \partialto X. 
\]
We first prove that $F_n$ is a proper partial map for any $n \in \N$. 
By assumption, $F_0$ is proper. Suppose that $F_n$ is proper. 
Let $i \colon X \to [0, \varepsilon] \times X$ 
be the closed inclusion defined by $i(x) = (\varepsilon, x)$, 
and let 
\[
g_{n+1} \colon [(n+1)\varepsilon, (n+2)\varepsilon] \to [n\varepsilon, (n+1)\varepsilon]
\]
be the homeomorphism defined by $g_{n+1}(t) = t-\varepsilon$. 
Then, we can factorize $F_{n+1}$ as follows: 
\[
[(n+1)\varepsilon, (n+2)\varepsilon] \times X
\xrightarrow{g_{n+1} \times \id_X} [n\varepsilon,(n+1)\varepsilon] \times X 
\xpartialto{F_n} X
\xrightarrow{i} [0, \varepsilon] \times X 
\xpartialto{F_0} X. 
\]
This shows that $F_{n+1}$ is proper. The induction is completed. 

Take any $s \in \R_{\geqslant 0}$. Choose $N \in \N$ so that $N\varepsilon \geqslant s$. 
Patching $F_n$ for $0 \leqslant n \leqslant N-1$, we see that 
\[
F|_{[0, N\varepsilon] \times X} \colon [0, N\varepsilon] \times X \partialto X
\]
is proper (Lemma~\ref{lem:proper-finite-closed-cover}). 
Now, $F|_{[0,s] \times X} \colon [0,s] \times X \partialto X$ is factorized as 
\[
[0,s] \times X \hookrightarrow 
[0,N\varepsilon] \times X \xpartialto{F|_{[0,N\varepsilon] \times X}} X, 
\]
and thus proper. 

(2) The `only if' part is trivial. Let us prove the `if' part. 
As in the proof of (1), we can prove that 
\[
F_n = F|_{[n\varepsilon, (n+1)\varepsilon] \times X} \colon 
[n\varepsilon, (n+1)\varepsilon] \times X \partialto X. 
\]
is openly defined for any $n \in \N$. 
Patching $F_n$ for all $n \in \N$, we see that $F$ is openly defined. 
\end{proof}

\begin{thm}\label{thm:cont-conley-2}
Let $F \colon \R_{\geqslant 0} \times X \partialto X$ 
be a continuous partial semiflow on a topological space $X$. 
Let $E$ and $E'$ be two subsets of $X$. Let $(a,b,c) \in A_F(E, E')$. 
\begin{enumerate}
\item If $F_E$ and $F_{E'}$ are finite-time proper, 
$f_{E'E}^{(a,b,c)}$ is proper. 
\item If $F_E$ and $F_{E'}$ are openly defined, $f_{E'E}^{(a,b,c)}$ is openly defined. 
\end{enumerate}
\end{thm}
\begin{proof}
The same as the proof of Theorem~\ref{thm:disc-conley-2}. 
We use Lemma~\ref{lem:F-f}. 
\end{proof}

\begin{defn}
Let $F \colon \R_{\geqslant 0} \times X \partialto X$ 
be a continuous partial semiflow on a locally compact Hausdorff space $X$. 
We say that a locally closed subset $E$ of $X$ is \emph{$F$-compactifiable} 
if the induced partial semiflow $F_E$ on $E$ is finite-time proper and openly defined.
\end{defn}

Let $\widetilde{\CptifSub}_F$ (resp.\ $\CptifSub_F$) be the full category of $\widetilde{\Sub}_F$ 
(resp.\ $\Sub_F$) consisting of $F$-compactifiable subsets of $X$. 
By Proposition~\ref{prop:cont-plus-2} and Theorem~\ref{thm:cont-conley-2}, 
the functor $\widetilde{\Conl}_F \colon \widetilde{\Sub}_F \to \SFlow(\Set_\ast)$ 
(resp.\ $\Conl_F \colon \Sub_F \to \Sz_\cont(\Set_\ast)$) 
induces a functor from $\widetilde{\CptifSub}_F$ to $\SFlow(\CHaus_\ast)$ 
(resp.\ from $\CptifSub_F$ to $\Sz_\cont(\CHaus_\ast)$). 

\begin{defn}
Let $F \colon \R_{\geqslant 0} \times X \partialto X$ 
be a continuous partial semiflow on a locally compact Hausdorff space $X$. 
We use the same notations 
$\widetilde{\Conl}_F \colon \widetilde{\CptifSub}_F \to \SFlow(\CHaus_\ast)$ and 
$\Conl_F \colon \CptifSub_F \to \Sz_\cont(\CHaus_\ast)$ for the functors defined above. 
\end{defn}

\section{Some lemmas}\label{sect:cont-lemmas}

In this section, we prepare some lemmas needed in the next section. 

\begin{lem}\label{lem:induced-compact-open}
Let $F \colon \R_{\geqslant 0} \times X \partialto X$ 
be a continuous partial semiflow on a topological space $X$. 
\begin{enumerate}
\item Let $K$ be a compact Hausdorff subset of $X$ such that 
$[0,\varepsilon] \times K \subset \Dom F$ for some $\varepsilon \in \R_{>0}$. 
Then, the induced partial semiflow
$F_K \colon \R_{\geqslant 0} \times K \partialto K$ is finite-time proper.
\item Let $U$ be an open subset of $X$ such that
$[0,\varepsilon] \times U \subset \Dom F$ for some $\varepsilon \in \R_{>0}$. 
Then, the induced partial semiflow 
$F_U \colon \R_{\geqslant 0} \times U \partialto U$ is openly defined. 
\end{enumerate}
\end{lem}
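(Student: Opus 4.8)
The plan is to reduce both statements to the short-time restriction by means of Lemma~\ref{lem:short-time}. Let $\varepsilon \in \R_{>0}$ be the number provided by the hypothesis. By Lemma~\ref{lem:short-time}~(1) it suffices, for (1), to prove that $F_K|_{[0,\varepsilon]\times K}$ is proper, and by Lemma~\ref{lem:short-time}~(2) it suffices, for (2), to prove that $F_U|_{[0,\varepsilon]\times U}$ is openly defined. In either case the object to analyse is the ``trapping'' domain
\[
D_E = \left\{ (t,x) \in [0,\varepsilon]\times E \mid f^s(x) \in E \text{ for all } s \in [0,t] \right\}
\]
(for $E = K$ and $E = U$ respectively), and the whole issue is to determine whether it is closed or open inside $[0,\varepsilon]\times E$. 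The common input is that, since $[0,\varepsilon]\times E \subset \Dom F$, the assignment $(s,x) \mapsto f^s(x)$ is continuous on $[0,\varepsilon]\times E$.

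For (1), I would show that $D_K$ is \emph{closed} in $[0,\varepsilon]\times K$. Since $K$ is compact in a weak Hausdorff space it is Hausdorff and closed, so $A = \{(s,x)\in[0,\varepsilon]\times K \mid f^s(x) \notin K\}$ is open. The complement of $D_K$ is exactly the set of $(t,x)$ for which $(s,x)\in A$ for some $s\in[0,t]$. Given such a point, pick $s_0\in[0,t_0]$ with $(s_0,x_0)\in A$ and a product neighbourhood $J\times V\subset A$ with $s_0\in J$, $x_0\in V$; noting $s_0>0$ (because $f^0=\id$ keeps $x_0\in K$), one may choose $s_1\in J$ with $s_1<t_0$, whence $f^{s_1}(x)\notin K$ for all $x\in V$ and the open box $(s_1,\varepsilon]\times V$ lies in the complement of $D_K$ and contains $(t_0,x_0)$. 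Hence $D_K$ is closed, therefore compact as a closed subset of the compact space $[0,\varepsilon]\times K$, and $F_K|_{D_K}\colon D_K \to K$ is a continuous map from a compact space to a Hausdorff space, hence proper by \cite[\href{https://stacks.math.columbia.edu/tag/04VB}{Lem.~08YD}]{Stacks}.

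For (2), I would instead show that $D_U$ is \emph{open} in $[0,\varepsilon]\times U$. Fix $(t_0,x_0)\in D_U$, so the compact slice $[0,t_0]\times\{x_0\}$ lies in the open set $C = \{(s,x)\mid f^s(x)\in U\}$. Using continuity of $s\mapsto f^s(x_0)$ and openness of $U$, I would first enlarge the interval to obtain $[0,t_1]\times\{x_0\}\subset C$ with $t_1>t_0$ when $t_0<\varepsilon$, and $t_1=\varepsilon$ when $t_0=\varepsilon$. The tube lemma, applied to the compact factor $[0,t_1]$, then yields an open $V\ni x_0$ in $U$ with $[0,t_1]\times V\subset C$. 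Taking $W=[0,t_1)$ when $t_0<\varepsilon$ and $W=[0,\varepsilon]$ when $t_0=\varepsilon$, the set $W$ is open in $[0,\varepsilon]$ and contains $t_0$, and $W\times V$ is an open neighbourhood of $(t_0,x_0)$ contained in $D_U$. This proves $D_U$ open.

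The main obstacle in both parts is the same: converting the condition ``$f^s(x)\in E$ for all $s\in[0,t]$'', which quantifies over a whole time interval, into a point-set-topological statement about $D_E$. The resolution is that the time interval is compact, so in (1) compactness of the box $[0,\varepsilon]\times K$ (via the open-complement argument) and in (2) compactness of $[0,t_0]$ (via the tube lemma) are precisely what force closedness, respectively openness. The delicate bookkeeping is confined to the boundary values $t=0$ and $t=\varepsilon$ of the time parameter, which must be treated separately as above.
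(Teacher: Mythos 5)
Your proposal is correct and shares the paper's skeleton: both reduce to the time-$\varepsilon$ restriction via Lemma~\ref{lem:short-time} and then prove that the trapping domain $D_E = \Dom \bigl( F_E|_{[0,\varepsilon] \times E} \bigr)$ is closed (for $E = K$) resp.\ open (for $E = U$) in $[0,\varepsilon] \times E$. Where you differ is the implementation of that key step. The paper writes the complement of $D_E$ uniformly as
\[
(\pi_2 \times \id_E)\bigl( (\pi_1 \times \id_E)^{-1}\bigl( ([0,\varepsilon] \times E) \smallsetminus F^{-1}(E) \bigr) \bigr),
\]
where $\pi_1, \pi_2 \colon \Delta_\varepsilon \to [0,\varepsilon]$ are the projections from $\Delta_\varepsilon = \{ (s,t) \mid s \leqslant t \}$, and then quotes that $\pi_2 \times \id$ is an open map (Lewis) in case (1), and proper, hence closed, in case (2). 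Your box argument in (1) and your tube-lemma argument in (2) are precisely the hand-unwound versions of these two facts (the tube lemma \emph{is} the standard proof that projection off a compact factor is a closed map), so the mathematical content coincides; your version is more elementary and self-contained, the paper's is shorter and disposes of both cases by one formula. Two points in your write-up should be tightened. First, your claim that $K$ is Hausdorff ``since it is compact in a weak Hausdorff space'' is not something the paper establishes, and it is dubious for arbitrary quasi-compact subspaces; what the paper does provide is Lemma~\ref{lem:compact-proper}: by part (2), $K$ is closed in $X$ and is itself compactly generated weak Hausdorff, and part (1) states directly that any continuous map from a compact space to a compactly generated weak Hausdorff space is proper. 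So replace your appeal to the Stacks lemma by Lemma~\ref{lem:compact-proper}~(1) applied to $F_K|_{D_K} \colon D_K \to K$; the conclusion is unaffected (note that closedness of $K$ in $X$, which you need for openness of your set $A$, also comes from Lemma~\ref{lem:compact-proper}~(2), not from Hausdorffness). Second, since products are $k$-ified under the paper's standing convention, your use of open boxes and of the tube lemma tacitly assumes that $[0,\varepsilon] \times E$ carries the ordinary product topology; this is true because $[0,\varepsilon]$ is compact Hausdorff and $K$, resp.\ $U$, is compactly generated (Lemma~\ref{lem:compact-proper}~(2), resp.\ openness in a compactly generated weak Hausdorff space), but it deserves an explicit sentence --- the paper's citation of Lewis at the corresponding spot is serving exactly this purpose.
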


\begin{proof}
(1) By Lemma~\ref{lem:short-time}~(1), it suffices to verify that 
\[
F_K|_{[0,\varepsilon] \times K} \colon [0,\varepsilon] \times K \partialto K
\]
is proper. By Lemma~\ref{lem:proper-source-compact}, to see that this partial map is proper, 
it is enough to verify that $\Dom (F_K|_{[0,\varepsilon] \times K})$ is compact Hausdorff. 
Since $[0,\varepsilon] \times K$ is compact Hausdorff, it suffices to see that 
$\Dom (F_K|_{[0,\varepsilon] \times K})$ is closed in $[0,\varepsilon] \times K$.
Define a closed subset $\Delta_\varepsilon$ of $[0,\varepsilon] \times [0,\varepsilon]$ and 
two continuous maps $\pi_1, \pi_2 \colon \Delta_\varepsilon \rightrightarrows [0,\varepsilon]$ by 
\[
\Delta_\varepsilon 
= \{ (s,t) \in [0,\varepsilon] \times [0,\varepsilon] \mid s \leqslant t \}, \qquad 
\pi_1(s,t) = s, \quad \pi_2(s,t) = t. 
\]
Consider the following diagram: 
\[
\begin{tikzcd}
\Delta_\varepsilon \times K \arrow[rr, "\pi_1 \times \id_K"] \arrow[d, "\pi_2 \times \id_K" left] 
& & {[0,\varepsilon] \times K} \arrow[rr, "F|_{[0,\varepsilon] \times K}"] & & X. \\
{[0,\varepsilon] \times K} & & & & 
\end{tikzcd}
\]
Then, we have 
\begin{align*}
&\Dom (F_K|_{[0,\varepsilon] \times K}) \\
&\qquad= ([0,\varepsilon] \times K) \smallsetminus (\pi_2 \times \id_K)((\pi_1 \times \id_K)^{-1}(([0,\varepsilon] \times K) \smallsetminus F^{-1}(K))).
\end{align*}
Since $\pi_2 \colon \Delta_\varepsilon \to [0,\varepsilon]$ is an open map, 
the base change $\pi_2 \times \id_X$ is also an open map (Lewis~\cite[Lem.~1.6]{Lew85}). 
Thus, $\Dom (F_K|_{[0,\varepsilon] \times K})$ is closed in $[0,\varepsilon] \times K$.

(2) By Lemma~\ref{lem:short-time}~(2), it suffices to verify that 
\[
\Dom (F_U|_{[0,\varepsilon] \times U}) = 
\left\{ (t,x) \in [0,\varepsilon] \times U \ \middle| \ x \in \bigcap_{s \in [0,t]} f^{-s}(E) \right\}
\]
is open in $[0,\varepsilon] \times U$. Let $\Delta_\varepsilon$ and 
$\pi_1, \pi_2 \colon \Delta_\varepsilon \rightrightarrows [0,\varepsilon]$ 
be as in the proof of (1). Consider the following diagram: 
\[
\begin{tikzcd}
\Delta_\varepsilon \times U \arrow[rr, "\pi_1 \times \id_U"] \arrow[d, "\pi_2 \times \id_U" left] 
& & {[0,\varepsilon] \times U} \arrow[rr, "F|_{[0,\varepsilon] \times U}"] & & X. \\
{[0,\varepsilon] \times U} & & & & 
\end{tikzcd}
\]
Then, we have 
\begin{align*}
&\Dom (F_U|_{[0,\varepsilon] \times U}) \\
&\qquad= ([0,\varepsilon] \times U) \smallsetminus (\pi_2 \times \id_U)((\pi_1 \times \id_U)^{-1}(([0,\varepsilon] \times U) \smallsetminus F^{-1}(U))).
\end{align*}
Since $\pi_2 \colon \Delta_\varepsilon \to [0,\varepsilon]$ is a proper map, 
the base change $\pi_2 \times \id_X$ is a closed map (Lemma~\ref{lem:proper-universally-closed}). 
Thus, $\Dom (F_U|_{[0,\varepsilon] \times U})$ is open in $[0,\varepsilon] \times U$.
\end{proof}

\begin{lem}\label{lem:intersection-compact}
Let $F \colon \R_{\geqslant 0} \times X \partialto X$ 
be a continuous partial semiflow on a topological space $X$. 
Let $K$ be a compact Hausdorff subset of $X$ such that 
$[0,\varepsilon] \times K \subset \Dom F$ for some $\varepsilon \in \R_{>0}$. 
Then, for any $a,b \in \R_{\geqslant 0}$, 
\[
f^a\left( \bigcap_{t \in [0,a+b]} f^{-t}(K) \right)
\]
is a compact Hausdorff subset of $X$. 
\end{lem}
\begin{proof}
By Lemma~\ref{lem:induced-compact-open}~(1), the induced partial semiflow 
$F_K \colon \R_{\geqslant 0} \times K \partialto K$ is finite-time proper. 
By Lemma~\ref{lem:F-f}~(1), the induced partial self-map $f_K^{a+b} \colon K \partialto K$ 
is proper. Since $K$ is compact Hausdorff, the inverse image 
\[
f_K^{-(a+b)}(K) = \Dom f_K^{a+b} = \bigcap_{t \in [0,a+b]} f^{-t}(K)
\]
must be compact Hausdorff. Thus, 
\[
f^a\left( \bigcap_{t \in [0,a+b]} f^{-t}(K) \right)
\]
is also compact Hausdorff. 
\end{proof}

\begin{rmk}\label{rmk:short-time-mild}
The assumption that $[0,\varepsilon] \times K \subset \Dom F$ 
(resp.\ $[0,\varepsilon] \times U \subset \Dom F$) in 
Lemmas~\ref{lem:induced-compact-open}~(1) and \ref{lem:intersection-compact} 
(resp.\ Lemma~\ref{lem:induced-compact-open}~(2)) is very mild. 
For example, this trivially holds when $F$ is an everywhere defined map. 
We also have the following result:
\end{rmk}

\begin{lem}\label{lem:short-time-compact}
Let $F \colon \R_{\geqslant 0} \times X \partialto X$ 
be a continuous partial semiflow on a topological space $X$. 
Suppose that $F$ is openly defined. 
Then, for any relatively compact subset $E$ of $X$, 
there exists $\varepsilon \in \R_{>0}$ such that $[0,\varepsilon] \times E \subset \Dom F$. 
\end{lem}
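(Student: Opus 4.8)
The plan is to exploit the two hypotheses in essentially the only way possible: the semiflow identity $f^0 = \id_X$ forces the whole slice $\{0\} \times X$ to lie in $\Dom F$, while openly definedness makes $\Dom F$ open; feeding the compactness provided by relative compactness of $E$ into a tube-lemma argument then yields a single $\varepsilon$ that works uniformly over $\overline{E}$.

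First I would record the inclusion $\{0\} \times X \subseteq \Dom F$. Since $f^0 = \id_X$, the partial map $f^0$ has domain all of $X$, and by definition $\Dom f^0 = \{ x \in X \mid (0,x) \in \Dom F \}$; hence $(0,x) \in \Dom F$ for every $x \in X$. In particular $\{0\} \times \overline{E} \subseteq \Dom F$, where $\overline{E}$ is compact because $E$ is relatively compact. This reduces the lemma to producing $\varepsilon \in \R_{>0}$ with $[0,\varepsilon] \times \overline{E} \subseteq \Dom F$.

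Next I would apply the tube lemma to the compact slice $\{0\} \times \overline{E}$ lying inside the open set $\Dom F$. As $\overline{E}$ is compact and $\Dom F$ is open, there is an open neighbourhood $W$ of $0$ in $\R_{\geqslant 0}$ with $W \times \overline{E} \subseteq \Dom F$. Since $W$ contains $[0,\delta)$ for some $\delta \in \R_{>0}$, choosing any $\varepsilon \in (0,\delta)$ gives $[0,\varepsilon] \times E \subseteq [0,\varepsilon] \times \overline{E} \subseteq W \times \overline{E} \subseteq \Dom F$, which is exactly what is required.

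The one step demanding care — and the main obstacle in the present setting — is the legitimacy of the classical tube lemma, since in this paper products are formed in compactly generated weak Hausdorff spaces rather than in $\Top$. This is harmless because the factor $\R_{\geqslant 0}$ is locally compact Hausdorff: for such a factor the compactly generated weak Hausdorff product agrees with the ordinary topological product, so $\Dom F$ really is open in the product-topology space $\R_{\geqslant 0} \times X$, and the subspace $\R_{\geqslant 0} \times \overline{E}$ carries the ordinary product topology of two locally compact Hausdorff spaces. The tube lemma therefore applies verbatim on $\R_{\geqslant 0} \times \overline{E}$; alternatively, one may invoke the corresponding point-set statement proved in the appendix on general topology.
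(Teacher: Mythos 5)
Your proof is correct, but it takes a genuinely different route from the paper. The paper never touches the product topology directly: it uses the semiflow law $f^t f^u = f^{t+u}$ to observe that the slices $\Dom f^t$ form a family of open subsets of $X$ (openness by Lemma~\ref{lem:F-f}~(2)) that is \emph{decreasing} in $t$ and whose union over $t \in \R_{>0}$ is all of $X$, because $\Dom F$ is open and contains $\{0\} \times X$; compactness of $K = \overline{E}$ then yields a finite subcover, and monotonicity collapses it to a single $\varepsilon$ with $K \subset \Dom f^\varepsilon$, which is exactly $[0,\varepsilon] \times K \subset \Dom F$. You instead run the classical tube lemma in the product, which requires you to confront the point the paper's slicing argument quietly avoids: openness of $\Dom F$ is a priori openness in the $k$-ified product. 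Your resolution is the right one — since $\R_{\geqslant 0}$ is locally compact Hausdorff and $X$ is compactly generated, the ordinary product is already compactly generated, so no $k$-ification occurs; this is precisely \cite[Prop.~A.5~(v)]{Sch18}, which the paper itself invokes elsewhere. In exchange, your argument does not use the semigroup law at all beyond $f^0 = \id_X$, so it actually proves the more general statement that any open subset of $\R_{\geqslant 0} \times X$ containing $\{0\} \times X$ contains a tube $[0,\varepsilon] \times \overline{E}$. One small inaccuracy worth fixing: you describe $\R_{\geqslant 0} \times \overline{E}$ as a product of two locally compact \emph{Hausdorff} spaces, but Hausdorffness of a bare compact subspace of a weak Hausdorff space is not something you have established (the paper's Lemma~\ref{lem:compact-proper}~(2) gives only that $\overline{E}$ is compactly generated weak Hausdorff); fortunately the tube lemma needs nothing but compactness of the slice $\{0\} \times \overline{E}$ and openness of $\Dom F$ in the ordinary ambient product, so the argument survives verbatim once that phrase is weakened.
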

\begin{proof}
Let $K = \overline{E}$ be the closure of $E$. It suffices to prove that 
$[0, \varepsilon] \times K \subset \Dom F$ for some $\varepsilon \in \R_{>0}$. 
Since $\Dom F$ is open in $\R_{\geqslant 0} \times X$ and contains $\{ 0 \} \times X$, 
we have 
\[
\bigcup_{t \in \R_{>0}} \Dom f^t = X \ (\supset K). 
\]
It follows from Lemma~\ref{lem:F-f}~(2) that $\Dom f^t$ is open in $X$ for each $t \in \R_{>0}$. 
Since $K$ is compact and the family $(\Dom f^t)_{t \in \R_{>0}}$ 
is decreasing with respect to $t$, 
there exists $\varepsilon \in \R_{>0}$ such that $K \subset \Dom f^\varepsilon$. 
This is equivalent to saying that $[0,\varepsilon] \times K \subset \Dom F$. 
\end{proof}

\section{The Conley index of isolated $F$-invariant subsets}\label{sect:cont-isolated}

\begin{defn}
Let $F \colon \R_{\geqslant 0} \times X \partialto X$ be a partial semiflow on a set $X$. 
We say that a subset $S$ of $X$ is \emph{$F$-invariant} 
if $S \subset \Dom f^t$ and $f^t(S) = S$ hold for any $t \in \R_{\geqslant 0}$. 
\end{defn}

\begin{defn}
Let $F \colon \R_{\geqslant 0} \times X \partialto X$ be a partial semiflow on a set $X$. 
Let $E$ be a subset of $X$. 
We define the \emph{$F$-invariant part} $I_F(E)$ of $E$ by 
\[
I_F(E) = \bigcap_{a,b \in \R_{\geqslant 0}} f^a \left( \bigcap_{t \in [0,a+b]} f^{-t}(E) \right).
\]
\end{defn}

\begin{rmk}\label{rmk:cont-decreasing}
The family 
\[
\left( f^a\left( \bigcap_{t \in [0,a+b]} f^{-t}(E) \right) \right)_{a,b \in \R_{\geqslant 0}}
\]
is decreasing in the following sense: 
for any $a,b,a',b' \in \R_{\geqslant 0}$ satisfying $a \leqslant a'$ and $b \leqslant b'$, we have
\[
f^{a'}\left( \bigcap_{t \in [0,a'+b']} f^{-t}(E) \right) 
\subset f^a\left( \bigcap_{t \in [0,a+b]} f^{-t}(E) \right).
\]
\end{rmk}

The above definition of the $F$-invariant part agrees with the usual one, namely: 

\begin{lem}\footnote{This Lemma is wrong. See Correction.}
Let $F \colon \R_{\geqslant 0} \times X \partialto X$ be a partial semiflow on a set $X$. 
Let $E$ be a subset of $X$. 
Then, $I_F(E)$ is the largest $F$-invariant subset contained in $E$. 
\end{lem}

\begin{proof}
The same as the proof of Lemma~\ref{lem:disc-invariant-part}. 
\end{proof}

\begin{defn}\label{defn:cont-isolated}
Let $F \colon \R_{\geqslant 0} \times X \partialto X$ 
be a continuous partial semiflow on a locally compact Hausdorff space $X$. 
Let $S$ be an $F$-invariant subset of $X$. 
\begin{enumerate}
\item A neighbourhood $E$ of $S$ is called \emph{isolating} if 
the following three conditions are satisfied: 
\begin{itemize}
\item $E$ is relatively compact. 
\item There exists $\varepsilon \in \R_{>0}$ such that 
$[0, \varepsilon] \times \overline{E} \subset \Dom F$. 
\item $S = I_F(\overline{E})$.
\end{itemize} 
Here, $\overline{E}$ denotes the closure of $E$. 
\item $S$ is called \emph{isolated} if it admits an isolating neighbourhood. 
\end{enumerate}
\end{defn}

\begin{rmk}
By Lemma~\ref{lem:short-time-compact}, if $F$ is openly defined, 
the second condition in Definition~\ref{defn:cont-isolated}~(1) is automatically satisfied. 
\end{rmk}

\begin{rmk}\label{rmk:cont-isolating-closure-smaller}
If $E$ is an isolating neighbourhood of $S$, 
the closure $\overline{E}$ is also isolating. 
If $E$ is an isolating neighbourhood of $S$ 
and $E'$ is a neighbourhood of $S$ contained in $E$,
then $E'$ is also isolating. 
\end{rmk}

\begin{lem}\label{lem:cont-compact}
Let $F \colon \R_{\geqslant 0} \times X \partialto X$ 
be a continuous partial semiflow on a locally compact Hausdorff space $X$. 
Let $S$ be an isolated $F$-invariant subset of $X$. 
Then, $S$ is compact. 
\end{lem}
\begin{proof}
The same as the proof of Lemma~\ref{lem:disc-compact}. 
We use Lemma~\ref{lem:intersection-compact}. 
\end{proof}

\begin{thm}\label{thm:cont-isolating-1}
Let $F \colon \R_{\geqslant 0} \times X \partialto X$ 
be a continuous partial semiflow on a locally compact Hausdorff space $X$. 
Let $S$ be an isolated $F$-invariant subset of $X$. 
Then, for any two isolating neighbourhoods $E$ and $E'$ of $S$, we have $E \sim_F E'$. 
\end{thm}
\begin{proof}
The same as the proof of Theorem~\ref{thm:disc-isolating-1}. 
We use Lemma~\ref{lem:intersection-compact}. 
\end{proof}

\begin{defn}
Let $F \colon \R_{\geqslant 0} \times X \partialto X$ 
be a continuous partial semiflow on a locally compact Hausdorff space $X$. 
Let $S$ be an $F$-invariant subset of $X$. 
We say that a neighbourhood $E$ of $S$ is an \emph{index neighbourhood} if 
it is isolating and $F$-compactifiable. 
\end{defn}

\begin{thm}\label{thm:cont-isolating-2}
Let $F \colon \R_{\geqslant 0} \times X \partialto X$ 
be a continuous partial semiflow on a locally compact Hausdorff space $X$. 
Let $S$ be an isolated $F$-invariant subset of $X$. 
Then, the set of index neighbourhoods of $S$ forms a neighbourhood base for $S$
(in particular, there exists an index neighbourhood of $S$). 
\end{thm}

\begin{proof}
The proof is similar to that of Theorem~\ref{thm:disc-isolating-2}, 
although slightly more involved.

Take any neighbourhood $N$ of $S$. 
Since $S$ is compact by Lemma~\ref{lem:cont-compact} and $X$ is locally compact Hausdorff, 
we can take a compact neighbourhood $K$ of $S$ contained in $N$. 
Let $U = K^\circ$ the interior of $K$. 
By Remark~\ref{rmk:cont-isolating-closure-smaller}, 
we may assume that $K$ and $U$ are isolating neighbourhoods of $S$. 
By Theorem~\ref{thm:cont-isolating-1}, we have $K \sim_F U$. 
Take $(a,b,c) \in A_F(K, U)$ arbitrarily, and put 
\[
E = \bigcap_{t \in [0,a+b]} f^{-t}(K) \cap \bigcap_{t' \in [a,a+b+c]} f^{-t'}(U). 
\]
Then, $E$ is an isolating neighbourhood of $S$. Indeed, 
\[
U' = \bigcap_{t \in [0,a+b+c]} f^{-t}(U)
\]
is open in $X$ by Lemmas~\ref{lem:F-f}~(2) and \ref{lem:induced-compact-open}~(2), 
and $S \subset U' \subset E$ holds. 
We have $E \subset N$. 
By Lemma~\ref{lem:induced-compact-open}~(1),
the induced partial semiflow $F_K \colon \R_{\geqslant 0} \times K \partialto K$ 
is finite-time proper. 
By Lemma~\ref{lem:induced-compact-open}~(2), 
the induced partial semiflow $F_U \colon \R_{\geqslant 0} \times U \partialto U$ is openly defined. 
Also, $E$ is locally closed in $X$. 
Now, it is enough to prove the following lemma: 

\begin{lem}\label{lem:cont-compactifiable-construction}
Let $F \colon \R_{\geqslant 0} \times X \partialto X$ 
be a continuous partial semiflow on a topological space $X$. 
Let $E$ and $E'$ be two subsets of $X$ and $(a,b,c) \in A_F(E, E')$. 
Assume that $F_E \colon \R_{\geqslant 0} \times E \partialto E$ is finite-time proper and 
$F_{E'} \colon \R_{\geqslant 0} \times E' \partialto E'$ is openly defined. Put 
\[
E'' = \bigcap_{t \in [0,a+b]} f^{-t}(E) \cap \bigcap_{t' \in [a,a+b+c]} f^{-t'}(E').
\]
Then, the induced partial semiflow 
$F_{E''} \colon \R_{\geqslant 0} \times E'' \partialto E''$ 
is finite-time proper and openly defined. 
\end{lem}

\begin{rmk}
In the setting of Lemma~\ref{lem:cont-compactifiable-construction}, 
$E''$ satisfies $E'' \sim_F E$ (and hence, $E'' \sim_F E'$). 
Indeed, we have $E'' \subset E$ and 
\[
\bigcap_{t \in [0,a+2b+c]} f^{-t}(E) \subset E''. 
\]
\end{rmk}

\begin{proof}[Proof of Lemma~\ref{lem:cont-compactifiable-construction}]
(1) Let us prove that $F_{E''}$ is finite-time proper. 
Take any $s \in \R_{\geqslant 0}$. Consider the following commutative diagram:
\[
\begin{tikzcd}
\Dom (F_{E''}|_{[0,s] \times E''}) \arrow[rr, "F_{E''}|_{[0,s] \times E''}"] \arrow[d, hook] & & 
E'' \arrow[d, hook] \\
\Dom (F_E|_{[0,s] \times E}) \arrow[rr, "F_E|_{[0,s] \times E}"] & & E. 
\end{tikzcd}
\]
Since the bottom map is proper, in order to see that the top map is proper, 
it is enough to verify that the square is a pullback, i.e.\ 
$(F_E|_{[0,s] \times E})^{-1}(E'') = \Dom (F_{E''}|_{[0,s] \times E''})$ 
(Lemma~\ref{lem:proper-base-change}). We see that 
\begin{align*}
&(F_E|_{[0,s] \times E})^{-1}(E'')
= \left\{ (t,x) \in [0,s] \times X \ \middle| \ x \in \bigcap_{u \in [0,t]} f^{-u}(E) \cap f^{-t}(E'') \right\} \\
&= \left\{ (t,x) \in [0,s] \times X \ \middle| \ x \in \bigcap_{u \in [0,t+a+b]} f^{-u}(E) \cap \bigcap_{u' \in [t+a,t+a+b+c]} f^{-u'}(E') \right\}
\end{align*}
and
\begin{align*}
&\Dom (F_{E''}|_{[0,s] \times E''})
= \left\{ (t, x) \in [0,s] \times X \ \middle| \ x \in \bigcap_{u'' \in [0,t]} f^{-u''}(E'') \right\} \\
&= \left\{ (t, x) \in [0,s] \times X \ \middle| \ x \in \bigcap_{u \in [0,t+a+b]} f^{-u}(E) \cap \bigcap_{u' \in [a,t+a+b+c]} f^{-u'}(E') \right\}.
\end{align*}
Since 
\begin{align*}
\bigcap_{u \in [0,t+a+b]} f^{-u}(E) 
&= \bigcap_{v' \in [0,t]} f^{-v'}\left( \bigcap_{v \in [0,a+b]} f^{-v} (E) \right) \\
&\subset \bigcap_{v' \in [0,t]} f^{-v'}(f^{-a}(E'))
= \bigcap_{u' \in [a,t+a]} f^{-u'}(E'), 
\end{align*}
we obtain 
\begin{align*}
\bigcap_{u \in [0,t+a+b]} f^{-u}(E) 
= \bigcap_{u \in [0,t+a+b]} f^{-u}(E) \cap \bigcap_{u' \in [a,t+a]} f^{-u'}(E')
\end{align*}
and
\begin{align*}
&\bigcap_{u \in [0,t+a+b]} f^{-u}(E) \cap \bigcap_{u' \in [t+a,t+a+b+c]} f^{-u'}(E') \\
&\qquad\qquad\qquad\qquad\qquad\qquad= \bigcap_{u \in [0,t+a+b]} f^{-u}(E) \cap \bigcap_{u' \in [a,t+a+b+c]} f^{-u'}(E').
\end{align*}
We have thus proved 
$(F_E|_{[0,s] \times E})^{-1}(E'') = \Dom (F_{E''}|_{[0,s] \times E''})$. 

Let us prove that $F_{E''}$ is openly defined. 
Since $E'' \subset f^{-(a+b+c)}(E')$, we can define a continuous map 
$g \colon \R_{\geqslant 0} \times E'' \to \R_{\geqslant 0} \times E'$ 
by $g(t,x) = (t, f^{a+b+c}(x))$. Consider the following commutative diagram: 
\[
\begin{tikzcd}
\Dom F_{E''} \arrow[r, hook] \arrow[d, "g" left] & \R_{\geqslant 0} \times E'' \arrow[d, "g"] \\
\Dom F_{E'} \arrow[r, hook] & \R_{\geqslant 0} \times E'.
\end{tikzcd}
\]
Since $\Dom F_{E'}$ is open in $E'$, 
in order to see that $\Dom F_{E''}$ is open in $E''$, 
it is enough to verify that the square is a pullback, 
i.e.\ $g^{-1}(\Dom F_{E'}) = \Dom F_{E''}$. 

We see that 
\begin{align*}
&g^{-1}(\Dom F_{E'}) = g^{-1} \left( \left\{ (t,x) \in \R_{\geqslant 0}\times X \ \middle| \ x \in \bigcap_{u' \in [0,t]} f^{-u'}(E') \right\} \right) \\
&\qquad= \left\{ (t,x) \in \R_{\geqslant 0}\times E'' \ \middle| \ x \in f^{-(a+b+c)} \left( \bigcap_{u' \in [0,t]} f^{-u'}(E') \right) \right\} \\
&\qquad= \left\{ (t,x) \in \R_{\geqslant 0}\times X \ \middle| \ x \in \bigcap_{u \in [0,a+b]} f^{-u}(E) \cap \bigcap_{u' \in [a, t+a+b+c]} f^{-u'}(E') \right\} 
\end{align*}
and
\begin{align*}
&\Dom F_{E''} = \left\{ (t,x) \in \R_{\geqslant 0}\times X \ \middle| \ x \in \bigcap_{u'' \in [0,t]} f^{-u''}(E'') \right\} \\
&\quad= \left\{ (t,x) \in \R_{\geqslant 0} \times X \ \middle| \ x \in \bigcap_{u \in [0,t+a+b]} f^{-u}(E) \cap \bigcap_{u' \in [a,t+a+b+c]} f^{-u'}(E') \right\}. 
\end{align*}
Since 
\begin{align*}
\bigcap_{u' \in [a, t+a+b+c]} f^{-u'}(E')
&= \bigcap_{v \in [a, t+a]} f^{-v} \left( \bigcap_{v' \in [0,b+c]} f^{-v'}(E') \right) \\
&\subset \bigcap_{v \in [a, t+a]} f^{-v}(f^{-b}(E))
= \bigcap_{u \in [a+b,t+a+b]} f^{-u}(E),
\end{align*}
we obtain
\[
\bigcap_{u' \in [a, t+a+b+c]} f^{-u'}(E') = \bigcap_{u \in [a+b,t+a+b]} f^{-u}(E) \cap \bigcap_{u' \in [a, t+a+b+c]} f^{-u'}(E')
\]
and
\begin{align*}
&\bigcap_{u \in [0,a+b]} f^{-u}(E) \cap \bigcap_{u' \in [a,t+a+b+c]} f^{-u'}(E') \\
&\qquad\qquad\qquad\qquad\qquad\qquad= \bigcap_{u \in [0,t+a+b]} f^{-u}(E) \cap \bigcap_{u' \in [a,t+a+b+c]} f^{-u'}(E').
\end{align*}
We have thus proved $\Dom F_{E''} = g^{-1}(\Dom F_{E'})$. 
\end{proof}

The proof of Theorem~\ref{thm:cont-isolating-2} is completed. 
\end{proof}

\begin{defn}
Let $F \colon \R_{\geqslant 0} \times X \partialto X$ 
be a continuous partial semiflow on a locally compact Hausdorff space $X$. 
Let $S$ be an isolated $F$-invariant subset of $X$. 
We define the \emph{Conley index of $S$ relative to $F$} to be 
$\Conl_F(E)$, where $E$ is an index neighbourhood of $S$. 
\end{defn}

By Theorems~\ref{thm:cont-isolating-1} and \ref{thm:cont-isolating-2}, 
the full subcategory of $\CptifSub_F$ consisting of all index neighbourhoods of $S$
is equivalent to the category with one object and one morphism. 
Thus, the Conley index $\Conl_F(E)$ is well-defined as an object of $\Sz_\cont(\CHaus_\ast)$ 
up to unique isomorphism. 

\begin{rmk}\label{rmk:cont-examples}
We can see in the same way as Example~\ref{ex:disc-examples} that 
there are $\sim_F$-equivalence classes of $F$-compactifiable subsets 
that cannot be obtained from 
Theorems~\ref{thm:cont-isolating-1} and \ref{thm:cont-isolating-2}.
\end{rmk}

\appendix

\part{Appendix}

\section{Proper maps of compactly generated weak Hausdorff spaces}\label{append:proper}

In this appendix, we prove some results on proper maps that are needed in this paper. 
As in the other parts of the paper, 
we assume every topological space to be compactly generated weak Hausdorff. 

We define proper maps as follows: 

\begin{defn}
Let $f \colon X \to Y$ be a continuous map of topological spaces. 
We say that $f$ is \emph{proper} if, for any compact Hausdorff subset $L$ of $Y$, 
the inverse image $f^{-1}(L)$ is compact Hausdorff. 
\end{defn}

\begin{lem}\label{lem:proper-composition-cancellation}
Let $f \colon X \to Y$ and $g \colon Y \to Z$ be two continuous maps. 
\begin{enumerate}
\item If $f$ and $g$ are proper, $gf$ is also proper. 
\item If $gf$ is proper, $f$ is also proper. 
\end{enumerate}
\end{lem}
\begin{proof}
(1) Obvious. 

(2) Let $L$ be a compact Hausdorff subset of $Y$. We have 
\[
f^{-1}(L) \subset (gf)^{-1}(g(L)). 
\]
Since $Z$ is weak Hausdorff, $g(L)$ is a compact Hausdorff subset of $Z$. 
Since $gf$ is proper, the inverse image $(gf)^{-1}(g(L))$ is a compact Hausdorff subset of $X$. 
On the other hand, since $Y$ is weak Hausdorff, $L$ is closed in $Y$. 
Hence, $f^{-1}(L)$ is closed in $X$ (and therefore, in $(gf)^{-1}(g(L))$). 
Thus, $f^{-1}(L)$ is compact Hausdorff. 
\end{proof}

\begin{lem}\label{lem:proper-base-change}
Proper maps are stable under base change. 
\end{lem}
\begin{proof}
Let $f \colon X \to Y$ be a proper map and $g \colon Y' \to Y$ be a continuous map. 
Consider the following pullback diagram: 
\[
\begin{tikzcd}
X \times_Y Y' \arrow[r, "g'"] \arrow[d, "f'" left] & X \arrow[d, "f"] \\
Y' \arrow[r, "g"] & Y.
\end{tikzcd}
\]
Take any compact Hausdorff subset $L'$ of $Y'$. 
Since $Y$ is weak Hausdorff, $g(L')$ is a compact Hausdorff subset of $Y$. 
Since $f$ is proper, the inverse image 
$f^{-1}(g(L')) \ (= g'(f'^{-1}(L')))$ is a compact Hausdorff subset of $X$. 
The following is a pullback diagram: 
\[
\begin{tikzcd}
f'^{-1}(L) \arrow[d, "f'" left] \arrow[r, "g'"] & f^{-1}(g(L')) \arrow[d, "f"] \\
L' \arrow[r, "g"] & g(L').
\end{tikzcd}
\]
We see that $f'^{-1}(L)$ is compact Hausdorff 
because it is a fibre product of two compact Hausdorff spaces. Thus, $f'$ is proper. 
\end{proof}

\begin{lem}\label{lem:proper-equivalent}
Let $f \colon X \to Y$ be a continuous map of topological spaces. 
Then, the following two conditions are equivalent: 
\begin{enumerate}[label = (\roman*)]
\item $f$ is proper. 
\item For any continuous map $g \colon L \to Y$ with $L$ compact Hausdorff, 
the fibre product $L \times_Y X$ is compact Hausdorff. 
\end{enumerate}
\end{lem}
\begin{proof}
(ii)~$\Rightarrow$~(i): Consider the case when $g$ is an inclusion. 

(i)~$\Rightarrow$~(ii): By Lemma~\ref{lem:proper-base-change}, 
the base change $f' \colon L \times_Y X \to L$ is proper. Since $L$ is compact Hausdorff, 
it follows that $L \times_Y X \ (= f'^{-1}(L))$ is also compact Hausdorff. 
\end{proof}

\begin{lem}\label{lem:proper-universally-closed}
Proper maps are universally closed. 
\end{lem}
\begin{proof}
By Lemma~\ref{lem:proper-base-change}, it suffices to verify that proper maps are closed. 
Let $f \colon X \to Y$ be a proper map. 
Let $A$ be a closed subset of $X$. Since $Y$ is compactly generated, 
it suffices to verify that, 
for any continuous map $g \colon L \to Y$ with $L$ compact Hausdorff, 
$g^{-1}(f(A))$ is closed in $L$. Consider the following pullback diagram: 
\[
\begin{tikzcd}
L \times_Y X \arrow[d, "g'" left] \arrow[r, "f'"] & L \arrow[d, "g"] \\
X \arrow[r, "f"] & Y.
\end{tikzcd}
\]
By Lemma~\ref{lem:proper-equivalent}, $L \times_Y X$ is compact Hausdorff. 
We see that $f' \colon L \times_Y X \to L$ is a continuous map between compact Hausdorff spaces, 
hence closed. Thus, $g^{-1}(f(A)) \ (= f'(g'^{-1}(A)))$ is closed in $L$. 
\end{proof}

\begin{lem}
An inclusion is proper if and only if it is closed. 
\end{lem}
\begin{proof}
The `if' part holds because closed subsets of compact Hausdorff spaces are compact Hausdorff. 
The `only if' part follows from Lemma~\ref{lem:proper-universally-closed}. 
\end{proof}

\begin{lem}\label{lem:proper-source-compact}
Let $f \colon K \to X$ be a continuous map with $K$ compact Hausdorff. 
Then, $f$ is proper. 
\end{lem}
\begin{proof}
Take any compact Hausdorff subset $L$ of $X$. Since $X$ is weak Hausdorff, $L$ is closed in $X$, 
and the inverse image $f^{-1}(L)$ is closed in $K$. 
Since $K$ is compact Hausdorff, $f^{-1}(L)$ is also compact Hausdorff. 
\end{proof}

\begin{lem}\label{lem:proper-finite-closed-cover}
Let $f \colon X \to Y$ be a continuous map of topological spaces. 
Suppose that there exists a finite closed cover $(A_i)_{i \in I}$ of $X$ such that 
$f|_{A_i} \colon A_i \to Y$ is proper for each $i \in I$. Then, $f$ is proper. 
\end{lem}
\begin{proof}
Take any compact Hausdorff subset $L$ of $Y$. 
For each $i \in I$, we see that $f^{-1}(L) \cap A_i$ is compact Hausdorff. We have a continuous map 
\[
g \colon \coprod_{i \in I} (f^{-1}(L) \cap A_i) \to X
\]
induced from the inclusions. 
Since $X$ is weak Hausdorff, $f^{-1}(L)$, which is the image of $g$, is compact Hausdorff.
\end{proof}

\section{Shift equivalences and the Szymczak category}\label{append:shift-equivalence}

In this appendix, we introduce a class of morphisms, called shift equivalences, 
and explain its relation to the Szymczak category, 
following the idea of Franks--Richeson~\cite{FR00}. 
The results in this appendix are not used in this paper. 
We include them, however, because they clarify a categorical meaning of the Szymczak category. 

We use the language of localization of categories. For basics on this subject, 
see e.g.\ \cite[\href{https://stacks.math.columbia.edu/tag/04VB}{\S 04VB}]{Stacks}.

\begin{defn}
Let $\C$ be a category. 
\begin{enumerate}
\item Let $f \colon X \to X$ be an endomorphism in $\C$. 
Then, we write $\widehat{f}$ for $f$ seen as a morphism from $f$ to itself in $\End(\C)$. 
\item Let $f \colon X \to X$ and $g \colon Y \to Y$ be two endomorphisms in $\C$. 
We say that a morphism $\varphi \colon f \to g$ in $\End(\C)$ is a \emph{shift equivalence} 
if there exist a morphism $\psi \colon g \to f$ in $\End(\C)$ and $n \in \N$ such that 
$\psi \varphi = \widehat{f}^n$ and $\varphi \psi = \widehat{g}^n$. 
\end{enumerate}
\end{defn}

We write $\ShiftEq(\C)$ for the class of all shift equivalences in 
the category $\End(\C)$. 

\begin{lem}\label{lem:disc-saturated-ms}
Let $\C$ be a category. 
Then, $\ShiftEq(\C)$ is a saturated multiplicative system in $\End(\C)$
(in the sense of \cite[\href{https://stacks.math.columbia.edu/tag/05Q8}{Defn.~05Q8}]{Stacks}). 
\end{lem}

\begin{proof}
Obviously, $\ShiftEq(\C)$ is closed under composition and contains the isomorphisms. 
Suppose that we are given a diagram
\[
\begin{tikzcd}
f \arrow[r, "\chi"] \arrow[d, "\varphi" left] & h \\
g & 
\end{tikzcd}
\]
in $\End(\C)$ such that $\varphi \in \ShiftEq(\C)$. 
Let us take $\psi \colon g \to f$ and $n \in \N$ such that 
$\psi\varphi = \widehat{f}^n$ and $\varphi\psi = \widehat{g}^n$. 
Then, the following diagram is commutative, and the right map 
$\widehat{h}^n$ is a morphism in $\ShiftEq(\C)$: 
\[
\begin{tikzcd}
f \arrow[r, "\chi"] \arrow[d, "\varphi" left] & h \arrow[d, "\widehat{h}^n"] \\
g \arrow[r, "\chi\psi"] & h.
\end{tikzcd}
\]
Suppose that $\varphi, \psi \colon f \rightrightarrows g$ 
are two morphisms in $\End(\C)$ and $\chi \colon h \to f$ is a morphism in $\ShiftEq(\C)$ such that 
$\varphi\chi = \psi\chi$. 
Let us take $\omega \colon f \to h$ and $n \in \N$ such that 
$\omega\chi = \widehat{h}^n$ and $\chi\omega = \widehat{f}^n$. 
Then, $\widehat{g}^n \varphi = \widehat{g}^n \psi$, 
and $\widehat{g}^n$ is a morphism in $\ShiftEq(\C)$. 
Thus, $\ShiftEq(\C)$ is a left multiplicative system. 
One can see that $\ShiftEq(\C)$ is a right multiplicative system in the same way. 

Let us prove that $\ShiftEq(\C)$ is saturated. Suppose that we are given a sequence 
\[
f \xrightarrow{\varphi} g \xrightarrow{\psi} h \xrightarrow{\chi} i 
\]
in $\End(\C)$ such that $\psi\varphi$ and $\chi\psi$ are morphisms in $\ShiftEq(\C)$. 
We want to prove that $\psi$ is in $\ShiftEq(\C)$. 
Take $\omega \colon h \to f$, $\tau \colon i \to g$, and $m,n \in \N$ such that 
\[
\omega\psi\varphi = \widehat{f}^m, \qquad \psi\varphi\omega = \widehat{h}^m, \qquad
\tau\chi\psi = \widehat{g}^n, \qquad \chi\psi\tau = \widehat{i}^n.
\]
Put $\sigma = \varphi \omega \widehat{h}^n$. Then, we have 
$\sigma \psi = \widehat{g}^{m+n}$ and $\psi \sigma = \widehat{h}^{m+n}$. 
\end{proof}

\begin{defn}
Let $\C$ be a category. 
We define a functor $Q \colon \End(\C) \to \Sz(\C)$ as follows: 
\begin{itemize}
\item $Q$ is identity on the objects. 
\item $Q\varphi = \overline{(\varphi, 0)}$ for each morphism $\varphi$ in $\End(\C)$. 
\end{itemize}
\end{defn}

\begin{prop}\label{prop:disc-localization}
Let $\C$ be a category. 
\begin{enumerate}
\item $(\Sz(\C), Q)$ is a localization of the category $\End(\C)$ 
at the class of morphisms $\ShiftEq(\C)$. 
\item A morphism $\varphi$ in $\End(\C)$ is a shift equivalence if and only if 
$Q \varphi$ is an isomorphism. 
\end{enumerate}
\end{prop}

\begin{proof}
(1) Let $f$ and $g$ be two endomorphisms in $\C$.
Let $\varphi \colon f \to g$ be a shift equivalence. 
Take $\psi \colon g \to f$ and $n \in \N$ such that $\psi\varphi = \widehat{f}^n$ and 
$\varphi\psi = \widehat{g}^n$. 
We see that $Q\varphi \ (= \overline{(\varphi, 0)})$ is an isomorphism in $\Sz(\C)$ 
with inverse $\overline{(\psi, n)}$. 

Let $\Phi \colon \End(\C) \to \D$ be a functor that sends shift equivalences 
to isomorphisms in $\D$. 
We want to show that there exists a unique functor 
$\overline{\Phi} \colon \Sz(\C) \to \D$ 
such that $\overline{\Phi}Q = \Phi$. 
Obviously, such $\overline{\Phi}$ must be equal to $\Phi$ on the objects. 
Notice that, 
for any morphism $\varphi \colon f \to g$ in $\End(\C)$ and any $n \in \N$, we have 
$\overline{(\varphi, n)} = Q\varphi \circ (Q \widehat{f})^{-n}$. 
This means that $\overline{\Phi}$ must satisfy 
$\overline{\Phi}\,\overline{(\varphi, n)} = \Phi \varphi \circ (\Phi \widehat{f})^{-n}$. 
To the contrary, if we define $\overline{\Phi}$ in this way, 
it is indeed a well-defined functor satisfying $\overline{\Phi}Q = \Phi$. 

(2) This follows from (1), Lemma~\ref{lem:disc-saturated-ms}, and 
\cite[\href{https://stacks.math.columbia.edu/tag/05Q9}{Lem.~05Q9}]{Stacks}.
\end{proof}

Next, let us consider the continuous time case. 

\begin{defn}
Let $\C$ be either $\Set_\ast$ or $\CHaus_\ast$. 
\begin{enumerate}
\item Let $F$ be an object of $\SFlow(\C)$. 
Then, for each $t \in \R_{\geqslant 0}$, 
we write $\widehat{f}^t$ for $f^t$ seen as a morphism from $F$ to itself in $\SFlow(\C)$. 
\item Let $F$ and $G$ be two objects in $\SFlow(\C)$.
We say that a morphism $\varphi \colon F \to G$ in $\SFlow(\C)$ 
is a \emph{shift equivalence} if there exist a morphism 
$\psi \colon G \to F$ in $\SFlow(\C)$ and $s \in \R_{\geqslant 0}$ such that 
$\psi \varphi = \widehat{f}^s$ and $\varphi \psi = \widehat{g}^s$. 
\end{enumerate}
\end{defn}

We write $\ShiftEq_\cont(\C)$ for the class of all shift equivalences in 
the category $\SFlow(\C)$. 

\begin{lem}\label{lem:cont-saturated-ms}
Let $\C$ be either $\Set_\ast$ or $\CHaus_\ast$. 
Then, $\ShiftEq_\cont(\C)$ is a saturated multiplicative system in $\SFlow(\C)$. 
\end{lem}

\begin{proof}
The same as the proof of Lemma~\ref{lem:disc-saturated-ms}. 
\end{proof}

\begin{defn}
Let $\C$ be either $\Set_\ast$ or $\CHaus_\ast$. 
We define a functor $Q \colon \SFlow(\C) \to \Sz_\cont(\C)$ as follows: 
\begin{itemize}
\item $Q$ is identity on the objects. 
\item $Q\varphi = \overline{(\varphi, 0)}$ for each morphism $\varphi$ in $\SFlow(\C)$. 
\end{itemize}
\end{defn}

\begin{prop}\label{prop:cont-localization}
Let $\C$ be either $\Set_\ast$ or $\CHaus_\ast$. 
\begin{enumerate}
\item $(\Sz_\cont(\C), Q)$ is a localization of the category $\SFlow(\C)$ 
at the class of morphisms $\ShiftEq_\cont(\C)$. 
\item A morphism $\varphi$ in $\SFlow(\C)$ is a shift equivalence if and only if 
$Q \varphi$ is an isomorphism. 
\end{enumerate}
\end{prop}

\begin{proof}
The same as the proof of Proposition~\ref{prop:disc-localization}. 
\end{proof}

\begin{rmk}
The relation between the Szymczak category and shift equivalences are explained in 
Franks--Richeson~\cite{FR00}. However, in \cite{FR00}, localization is not used; 
the Conley index is thus defined as a shift equivalence class in the endomorphism category. 
In other words, it is defined as an \emph{isomorphism class} 
of the objects in the Szymczak category. 
The Conley index defined in Szymczak~\cite{Szy95} carries more information: 
it is a connected simple system in the Szymczak category, 
i.e.\ an object of the Szymczak category defined up to \emph{unique isomorphism}, 
which retains the information of automorphisms. 
\end{rmk}

\subsection*{Acknowledgements}
I would like to thank Hokuto Konno for his helpful comments on Floer theory. 
This work was supported by JSPS KAKENHI Grant Number 19K14529.

\end{document}